\theoremstyle{definition}
\theoremstyle{definition}    \newtheorem{dfn}{Definition}[section]
\theoremstyle{definition}              
\theoremstyle{definition}    
\theoremstyle{definition}    \newtheorem{prp}[dfn]{Proposition}
\theoremstyle{definition}    \newtheorem{thm}{Theorem}
\theoremstyle{definition}    \newtheorem{lem}[dfn]{Lemma}
\theoremstyle{definition}    
\theoremstyle{definition}    
\theoremstyle{definition}    \newtheorem{nta}[dfn]{Notation}
\theoremstyle{definition}    
\theoremstyle{definition}    
\theoremstyle{definition}    
\newcommand{\trm}{\textrm}
\newcommand{\C}{{\mathbb C}}
\newcommand{\R}{{\mathbb R}}
\newcommand{\Z}{{\mathbb Z}}
\newcommand{\E}{{\mathbb E}}
\newcommand{\cF}{\mathcal F}  
 \newcommand{\cI}{\mathcal I}
\newcommand{\cR}{\mathcal R} 
\newcommand{\f}{\mathfrak}
\newcommand{\al}{\alpha}
\newcommand{\be}{\beta}
\newcommand{\ga}{\gamma}     \newcommand{\Ga}{{\Gamma}}
\newcommand{\de}{\delta}
\newcommand{\te}{\theta}    
\newcommand{\si}{\sigma}     
\newcommand{\om}{\omega}     \newcommand{\Om}{{\Omega}}
\newcommand{\Ima}{\textrm{Im }}
\newcommand{\ch}{\textrm{ch}} 
\newcommand{\cs}{\textrm{cs}}
\newcommand{\we}{\wedge}      
\newcommand{\tsr}{\otimes}
\newcommand{\srl}{\stackrel}
 \newcommand{\ra}{\rightarrow} \newcommand{\lra}{\longrightarrow} \newcommand{\embed}{\hookrightarrow}
  \newcommand{\wbar}{\overline}
\newcommand{\wht}{\widehat}
\newcommand{\wtl}{\widetilde}
\newcommand{\ptl}{\partial}
\newcommand{\na}{\nabla}
\newcommand{\bl}{\bullet}
\newcommand{\isom}{\cong}     
\newcommand{\bmat}{\left(\begin{array}}  \newcommand{\emat}{\end{array}\right)}
\newcommand{\barr}{\begin{array}}  \newcommand{\earr}{\end{array}}
\newcommand{\bcd}{\begin{CD}}  \newcommand{\ecd}{\end{CD}}
\newcommand{\beq}{\begin{equation}\begin{aligned}}  \newcommand{\eeq}{\end{aligned}\end{equation}}
\newcommand{\beqs}{\begin{equation*}\begin{aligned}}  \newcommand{\eeqs}{\end{aligned}\end{equation*}}
\title{A smooth variant of Hopkins-Singer differential $K$-theory}
\author{Byungdo Park}
\address{Hausdorff Research Institute for Mathematics (HIM), Poppelsdorfer Allee 45, 53115 Bonn, Germany}
\curraddr{School of Mathematics, Korea Institute for Advanced Study,
85 Hoegiro, Dongdaemun-gu, Seoul, 02455, Republic of Korea}
\email{byungdo@kias.re.kr}
\subjclass[2010]{Primary 19L50; Secondary 19L10}
\date{\today}
\keywords{Differential $K$-theory, Chern Character, de Rham theorem}
\begin{document}
\sloppy
\maketitle
\begin{abstract}
We introduce a smooth variant of the Hopkins-Singer model of differential $K$-theory. We prove that our model is naturally isomorphic to the Hopkins-Singer model and also to the Tradler-Wilson-Zeinalian model of differential $K$-theory.
\end{abstract}
\maketitle
\setcounter{tocdepth}{1}
\tableofcontents

\section{Introduction}

A differential cohomology theory is a construction on smooth manifolds combining topological data and differential form data in a homotopy theoretic way. The first construction of a differential cohomology theory was due to Cheeger and Simons \cite{CS} for singular cohomology theory which has applications to geometry. For $K$-theory, Karoubi \cite{Ka} developed $K$-theory with $\R/\Z$ coefficients and Lott \cite{Lo} developed $\R/\Z$-index theory leading to a construction of differential $K$-theory and index theorems in differential $K$-theory. (See \cite{Kl, FL}.) Furthermore, there have been a considerable interest from type IIA/B string theory to represent Ramond-Ramond fields and to formulate $T$-duality. (See \cite{F, KV}.)

In \cite{HS} Hopkins and Singer explicitly constructed a differential cohomology theory for any generalized cohomology theory and hence for $K$-theory. Their construction provides a correct model of differential $K$-theory in the sense of the aforementioned homotopy theoretic idea. Following this work, several authors have developed models of differential $K$-theory by using more geometric cocycle data. (See \cite{BS1, SS, Kl, FL, BSSW, TWZ1, HMSV, TWZ2, GL}.) Furthermore, the Hopkins-Singer model has been revisited by \cite{BG, BNV} with the idea that differential cohomology theories are $\infty$-sheaves of spectra. More recently Grady and Sati \cite{GS} developed spectral sequences in differential generalized cohomology theories and have opened venues in computational aspects.

One natural question arising at this point is whether all the known models of differential $K$-theory are isomorphic. Bunke and Schick \cite{BS2} gave an answer to this question: Any two differential extensions with integration of the same generalized cohomology theory that satisfies certain conditions (such as being rationally even) are uniquely determined up to a unique natural isomorphism. However, it is still interesting to see a direct map between any two different models and proving such a map being an isomorphism has technically intricate aspects.

This paper is a technical report introducing a smooth variant of the Hopkins-Singer differential $K$-theory. This model has an advantage that its cocycles consist of smooth data; continuous maps and singular cochains in the Hopkins-Singer model are replaced by smooth maps and differential forms, respectively. Furthermore, it constitutes an abelian group naturally isomorphic to the original Hopkins-Singer model. Such an aspect facilitates comparisons with other models; we establish a natural isomorphism from the Tradler-Wilson-Zeinalian differential $K$-theory \cite{TWZ1, TWZ2} to the Hopkins-Singer differential $K$-theory, and hence adding one more item to the following list of known direct comparisons between differential $K$-theory models.

\begin{itemize}
\item Freed-Lott-Klonoff model to Hopkins-Singer model: Klonoff \cite[Theorem 4.34]{Kl} (Even) Freed and Lott \cite[Proposition 9.21]{FL}  (Odd).
\item Simons-Sullivan model to Freed-Lott-Klonoff model: Simons and Sullivan \cite{SS}, Ho \cite[Theorem 1]{Ho1}. (Even)
\item Bunke-Schick model to Freed-Lott-Klonoff model: Ho \cite[Proposition 3.2]{Ho2}. (Even)
\item Tradler-Wilson-Zeinalian model to Simons-Sullivan model: Tradler, Wilson, and Zeinalian \cite[Remark 3.27]{TWZ2}. (Even)
\item Tradler-Wilson-Zeinalian model to Hekmati-Murray-Schlegel-Vozzo model: Hekmati, Murray, Schlegel, and Vozzo \cite[Theorem 4.2]{HMSV}. (Odd)
\item Gorokhovsky-Lott model to Freed-Lott-Klonoff model: Gorokhovsky and Lott \cite[Theorem 1]{GL}. (Even and odd)
\end{itemize}

This paper is organized as follows. Section \ref{SEC.MainResults} outlines definitions and theorems in this paper. Section \ref{ProofofTheorem1} proves Theorem \ref{THM.theorem.1} establishing a natural isomorphism from the smooth variant of the Hopkins-Singer model to the Hopkins-Singer model. Section \ref{ProofofTheorem2} proves Theorem \ref{THM.theorem.2} which constructs a natural isomorphism from the Tradler-Wilson-Zeinalian model to the smooth variant of the Hopkins-Singer model. Appendix \ref{APP.A} gives a proof of relative de Rham theorem which is used in Section \ref{sec.injectivity}.

\noindent \textbf{Acknowledgements.} We would like to thank Mahmoud Zeinalian for many helpful discussions and sharing ideas. We also would like to thank Scott Wilson for help in completing this work and useful conversations. We also thank Scott Wilson, Arthur Parzygnat, and Corbett Redden for reading the preliminary version of this paper and providing helpful suggestions and comments. We gratefully acknowledge a partial support on this work from Mahmoud Zeinalian's NSF grant DMS-1309099. Some part of this work was completed during our attendance at the conference \emph{Topology of Manifolds --- A conference in honour of Michael Weiss' 60th birthday}. We thank the organizers for financial support and hospitality. Finally, we thank the anonymous referee for helpful comments.

\section{Main results}\label{SEC.MainResults}
\begin{nta}\label{NTA.1} Throughout this paper, $\cR$ is the real vector space $$K^\bl(\text{point})\tsr \R=\R[u,u^{-1}],$$ where $u$ and $u^{-1}$ are Bott elements with degree $2$ and $-2$, respectively. Also, $X$ is a smooth manifold, $\Om^k(X)$ the differential graded algebra of $\cR$-valued differential $k$-forms on $X$, $\Om_{\trm{cl}}^k(X)$ (resp. $\Om^k_{\trm{exact}}(X)$) the subalgebra of closed (resp. exact) $\cR$-valued differential $k$-forms on $X$, $C^k(X;\cR)$ the degree $k$ smooth singular cochain group of $X$ with coefficients in $\cR$, and $Z^k(X;\cR)$ the subgroup of $C^k(X;\cR)$ consisting of degree $k$ cocycles. Note that our differential forms and cochains are graded by their total degree; for example, $\om\cdot u\in \Om^{k+2}(X)$ if $\om$ is a degree $k$ real-valued differential form. Unless otherwise mentioned, the integration symbol $\int$ means the integration of differential forms over smooth singular chains. We denote by $I$ the closed unit interval $[0,1]$, $p$ the projection $p:X\times I\ra X$ onto the first factor, and $p_i$ the projection onto the $i^{\trm{th}}$ factor of the domain. We will also use a notation $\psi_t$ to denote the $t$-slice maps $\psi_t:X\embed X\times I$ defined by $\psi_t(x)=(x,t)$.
\end{nta}

\begin{nta}\label{NTA.2}
In this paper $\bl$ is always $0$ or $1$. We will use the notation $\cF_\bl$ to denote classifying spaces of complex $K$-theory $\cF_0=BU\times \Z$ and $\cF_1=U$. (We refer readers to Tradler, Wilson, and Zeinalian \cite{TWZ2}, Section 3 for the models of $BU\times\Z$ and $U$ that we will be using.) We also denote by $c_\bl$ the universal Chern character form ${c_\bl}\in\Om_{\trm{cl}}^{\bl}(\cF_\bl)$ representing universal Chern characters in $H^{\bl}(\cF;\cR)$, defined by  ${c_0}:=\ch(\na_{\trm{univ}}):=e^{\frac{i}{2\pi}\na_{\trm{univ}}^2\cdot u^{-1}}$, where $\na_{\trm{univ}}$ is the universal connection on the universal bundle $\E\ra \cF_0$ and ${c_1}:=\text{tr}\sum_{n\in \Z\cup\{0\}}\frac{(-1)^n}{(2\pi i)^{n+1}}\frac{n!}{(2n+1)!} \te^{2n+1}\cdot u^{-2n}$, where $\te$ is the canonical 1-form on the stabilized unitary group $U$ valued in Lie algebra of $U$. (Compare \cite[Definition 3.2]{TWZ2}.)

The space $\cF_\bl$ is endowed with a homotopy commutative $H$-space structure $m_\bl:\cF_\bl\times\cF_\bl \ra  \cF_\bl$. The map $m_0$ is defined in \cite[Definition 3.21]{TWZ2} and $m_1$ in \cite[Definition 3.7]{TWZ2} Both of these maps are smooth by construction. (See also \cite[p.535]{TWZ2} for discussions on smoothness and differential forms on $U$ and $BU\times \Z$.) We will write $\cI_\bl\in\cF_\bl$ to denote $\cI_0:\C_{-\infty}^{\infty}\ra \C_{-\infty}^{\infty}$ the orthogonal projection onto $\C_{-\infty}^{0}$, where $\C_{-\infty}^{\infty}$ and $\C_{-\infty}^{0}$ are $\C$-vector spaces spanned by $\{e_i\}_{i\in\Z}$ and $\{e_i:i\in\Z^{-}\}$, respectively, and $\cI_1:=\textbf{1}$ in $U$.
\end{nta}

\begin{dfn}\label{DEF.HS.diff.Ktheory} Let $c:=c_\bl$ be a universal Chern character form. The \textbf{Hopkins-Singer differential $K$-theory} of $X$, denoted by $\widehat K^\bl(X)$, is an abelian group whose elements are equivalence classes of triples $(f,h,\om)$ consisting of the following data.

\begin{itemize}
\item A continuous map $f:X\ra \cF_\bl$.
\item $\om\in\Om^{\bl}(X)$
\item $h\in C^{\bl-1}(X;\cR)$, satisfying \beq\label{EQN.diff.func.rel.2} \de h=f^*\int c-\int\om.\eeq
\end{itemize} Two triples $(f_0,h_0,\om_0)$ and $(f_1,h_1,\om_1)$ are equivalent if and only if the following holds.\begin{itemize}
\item $\om_0=\om_1$.
\item There exists an \textbf{interpolating triple} $(F,H,p^*\om_0)$ consisting of a continuous map $F:X\times I\ra \cF_\bl$ and a cochain $H\in C^{\bl-1}(X\times I;\cR)$, satisfying $$\de H=F^*\int c-\int p^*\om_0,$$
and whose restriction to $X\times \{0\}$ and $X\times \{1\}$ are the triples $(f_0,h_0,\om_0)$ and $(f_1,h_1,\om_1)$, respectively.
\end{itemize}

The group structure is defined as follows. \beq\label{eqn.HS.triple.sum}(f_1,h_1,\om_1)+(f_2,h_2,\om_2):=(m_\bl(f_1,f_2),h_1+h_2,\om_1+\om_2).\eeq
\end{dfn}

\begin{lem}\label{LEM.group.structure.on.HS}
The addition $+$ defined in \eqref{eqn.HS.triple.sum} is well-defined and gives $\widehat K^\bl(X)$ the structure of an abelian group.
\end{lem}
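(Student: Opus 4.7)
The plan is to exploit the H-space structure on $\cF_\bl$ together with the key additivity property of the universal Chern character form at the level of differential forms, namely
$$m_\bl^* c_\bl = p_1^* c_\bl + p_2^* c_\bl$$
on $\cF_\bl\times \cF_\bl$. This identity holds in the TWZ model by construction: for $\bl=0$, $m_0$ is induced by direct sum of projections compatible with the universal connection, while for $\bl=1$, $m_1$ is multiplication of unitaries and $c_1$ expressed in terms of $\theta$ transforms additively. Integrating over smooth singular simplices yields the cochain-level identity $m_\bl(f_1,f_2)^*\int c = f_1^*\int c+f_2^*\int c$ for every pair of maps $f_i\colon X\to \cF_\bl$.

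Granting this, well-definedness on triples is immediate from $\de(h_1+h_2)=f_1^*\int c+f_2^*\int c-\int(\om_1+\om_2)=m_\bl(f_1,f_2)^*\int c-\int(\om_1+\om_2)$, and well-definedness on equivalence classes follows by applying the same recipe on $X\times I$: from interpolating triples $(F_i,H_i,p^*\om_i)$ one forms $(m_\bl(F_1,F_2),H_1+H_2,p^*(\om_1+\om_2))$, which is an interpolating triple for the sums. Commutativity and associativity reduce to the homotopy commutativity and homotopy associativity of $m_\bl$: for commutativity, a chosen homotopy $\Phi\colon\cF_\bl\times\cF_\bl\times I\to\cF_\bl$ from $m_\bl$ to $m_\bl\circ\text{swap}$ gives $F=\Phi\circ((f_1,f_2)\times\text{id}_I)$, and the cochain component of the interpolating triple has the form $p^*(h_1+h_2)+Q$ for a cochain $Q\in C^{\bl-1}(X\times I;\cR)$ satisfying $\de Q=F^*\int c-p^*m_\bl(f_1,f_2)^*\int c$ and $Q|_{X\times\{0,1\}}=0$. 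The identity element is $[(\cI_\bl,0,0)]$, a valid triple because the constant map $\cI_\bl$ satisfies $\cI_\bl^* c_\bl=0$ (vanishing virtual rank for $\bl=0$; $\theta|_{\mathbf{1}}=0$ for $\bl=1$), and inverses are obtained from a homotopy inverse $j_\bl\colon\cF_\bl\to\cF_\bl$ (which exists since each $\cF_\bl$ is a loop space by Bott periodicity), with cochain component extracted from the transgression $\int_I G^* c$ of a chosen nullhomotopy $G$ of $m_\bl(f,j_\bl\circ f)$.

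The main obstacle is the systematic construction of the cochain $Q$ (and its analogues in each axiom) with the prescribed vanishing boundary values. This amounts to establishing that each closed relative cochain arising in the argument is in fact a relative coboundary in $C^*(X\times I, X\times\{0,1\};\cR)$. The additivity identity guarantees that the closed cochain $F^*\int c-p^*m_\bl(f_1,f_2)^*\int c$ restricts to zero on $X\times\{0\}$ and on $X\times\{1\}$, so it is at least a relative cocycle; its absolute exactness follows from homotopy invariance along $p\colon X\times I\to X$, and the upgrade from an absolute primitive to one with vanishing boundary values is achieved by a boundary-restriction argument using the relative de Rham theorem proved in Appendix \ref{APP.A}. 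Once this technical input is in place, the four abelian group axioms follow routinely from the homotopy-theoretic properties of $m_\bl$.
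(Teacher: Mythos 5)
Your well-definedness argument is fine and is essentially the paper's: strict additivity $m_\bl(f_1,f_2)^*\int c=f_1^*\int c+f_2^*\int c$ gives the triple relation for the sum, and combining interpolating triples (the paper varies one summand at a time, via $(m_\bl(F,p^*f_2),H+p^*h_2,p^*\om_1+p^*\om_2)$) gives well-definedness on classes. The gap is in your treatment of commutativity and associativity. You take ``a chosen homotopy'' $\Phi$ realizing homotopy commutativity (resp.\ associativity) of $m_\bl$, set $F=\Phi\circ((f_1,f_2)\times\mathrm{id}_I)$, and assert the existence of $Q\in C^{\bl-1}(X\times I;\cR)$ with $\de Q=F^*\int c-p^*m_\bl(f_1,f_2)^*\int c$ and $Q|_{X\times\{0,1\}}=0$. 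Such a $Q$ exists if and only if the class of this relative cocycle in $H^{\bl}(X\times I,X\times\{0,1\};\cR)\cong H^{\bl-1}(X;\cR)$ vanishes, and under this suspension isomorphism (slant product with $I$) that class is, up to sign, the class of the transgression $\cs(F)$, which is a closed form here because both ends of $F$ pull back $c$ to the same form. Vanishing of the absolute class on $X\times I$ --- which is all your homotopy-invariance remark provides --- only places the relative class in the image of the connecting homomorphism; it does not kill it. And it is genuinely nonzero for a bad choice of $\Phi$: concatenating any given homotopy with a self-homotopy of $m_\bl$ whose transgression class is nontrivial changes $[\cs(F)]$ by that class, after which no $Q$ with vanishing boundary values can exist. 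The relative de Rham theorem of Appendix \ref{APP.A} cannot repair this: it identifies relative de Rham with relative singular cohomology, but it does not trivialize a relative cocycle whose relative class is nonzero.

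This is exactly the point the paper's proof is organized around: after reducing to smooth classifying maps (Case II of Section \ref{SEC.surj.smoothhs.to.hs}), it invokes the specific homotopies $\Ga_\bl$ of Lemmas 3.9, 3.23, and 3.24(2) of \cite{TWZ2}, which are smooth and satisfy $\cs(\Ga_\bl)=0$; with that input, Lemma \ref{LEM.triple.homotopy.effect} produces the correction cocycle $(-1)^{|c|+1}\Ga_\bl^*\int c\big/I$, which is then a coboundary, and Lemma \ref{LEM.cohomologous.cocycle.on.XtimesI} converts this into the required interpolating triple (Section \ref{SEC.GroupHomo.Thm1}). So the vanishing (or at least exactness) of the Chern--Simons form of the structure homotopies is an essential hypothesis, not a technical convenience, and your proposal does not secure it. Your sketch for the identity matches the paper's $(\cI_\bl,0,0)$, and your inverse construction is closer to sound, since there the cochain component of the candidate inverse can absorb the transgression of the chosen nullhomotopy; but as written the central step for commutativity and associativity fails.
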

\begin{proof}
Since $m_\bl(f_1,f_2)^*\int c=f_1^*\int c+f_2^*\int c$, it is readily seen that the RHS of \eqref{eqn.HS.triple.sum} satisfies the triple relation. Suppose $(F,H,p^*\om_1)$ is an interpolating triple between $(f_1,h_1,\om_1)$ and $(f_1',h_1',\om_1)$. The triple $(m_\bl(F,p^*f_2),H+p^*h_2,p^*\om_1+p^*\om_2)$ interpolates between $(m_\bl(f_1,f_2),h_1+h_2,\om_1+\om_2)$ and $(m_\bl(f_1',f_2),h_1'+h_2,\om_1+\om_2)$.

The operation $+$ being an abelian group operation follows from Lemmas 3.9, 3.23, and 3.24 (2) of \cite{TWZ2} and verifying it needs several lemmas we shall prove in the following section. We shall give a proof in Section \ref{SEC.GroupHomo.Thm1}.
\end{proof}

Instead of smooth singular cochains and continuous maps in Definition \ref{DEF.HS.diff.Ktheory}, the following definition uses differential forms and smooth maps.

\begin{dfn} Let $c:=c_\bl$ be a universal Chern character form. The \textbf{smooth Hopkins-Singer differential $K$-theory} of $X$, denoted by $\check K^\bl(X)$, is an abelian group whose elements are equivalence classes of triples $(f,h,\om)$ consisting of the following data. \begin{itemize}
\item A smooth map $f:X\ra \cF_\bl$.
\item $\om\in\Om^{\bl}(X)$
\item $h\in \Om^{\bl-1}(X)$, satisfying \beq\label{EQN.diff.func.rel.1} dh=f^*c-\om.\eeq
\end{itemize}
Two triples $(f_0,h_0,\om_0)$ and $(f_1,h_1,\om_1)$ are equivalent if and only if the following holds. \begin{itemize}
\item $\om_0=\om_1$.
\item There exists an \textbf{interpolating triple} $(F,H,p^*\om_0)$, consisting of a smooth map $F:X\times I\ra \cF_\bl$, and a differential form $H\in \Om^{\bl-1}(X\times I)$, satisfying $$dH=F^*c-p^*\om_0,$$
and whose restriction to $X\times \{0\}$ and $X\times \{1\}$ are the triples $(f_0,h_0,\om_0)$ and $(f_1,h_1,\om_1)$, respectively.
\end{itemize} The group structure is defined as follows. \beq\label{eqn.smooth.HS.triple.sum}(f_1,h_1,\om_1)+(f_2,h_2,\om_2):=(m_\bl(f_1,f_2),h_1+h_2,\om_1+\om_2).\eeq
\end{dfn}

\begin{lem}\label{LEM.group.structure.on.smoothHS}
The addition $+$ defined in \eqref{eqn.smooth.HS.triple.sum} is well-defined and gives $\check K^\bl(X)$ the structure of an abelian group.
\end{lem}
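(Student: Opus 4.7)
The plan is to mirror the proof of Lemma \ref{LEM.group.structure.on.HS}, replacing smooth singular cochains by differential forms and continuous maps by smooth maps throughout. The key form-level input is the additivity identity
\[
m_\bl^* c_\bl = p_1^* c_\bl + p_2^* c_\bl \quad \text{in } \Om_{\trm{cl}}^\bl(\cF_\bl \times \cF_\bl),
\]
which strengthens the integrated version used in Lemma \ref{LEM.group.structure.on.HS} and holds on the nose by the explicit constructions of $m_\bl$ and $c_\bl$ recalled in Notation \ref{NTA.2} from \cite[\S 3]{TWZ2}. This is precisely the property that makes the smooth model well-defined at the level of forms rather than merely at the level of cochains.

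Granting this identity, the triple relation \eqref{EQN.diff.func.rel.1} for the sum is immediate from $dh_i = f_i^* c - \om_i$ together with pullback along $(f_1,f_2)$. For well-definedness on equivalence classes, given an interpolating triple $(F, H, p^* \om_1)$ between $(f_1, h_1, \om_1)$ and $(f_1', h_1', \om_1)$, I would check that
\[
\bigl( m_\bl(F, p^* f_2),\; H + p^* h_2,\; p^*(\om_1 + \om_2) \bigr)
\]
is a smooth interpolating triple between the corresponding sums; the triple relation reduces, via $d(p^* h_2) = p^*(f_2^* c - \om_2)$ and the form identity pulled back along $(F, p^* f_2)$, to the computation of the previous sentence. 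Independence of the representative in the second slot is symmetric.

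For the abelian group axioms---associativity, commutativity, and existence of identity and inverses---I would take the identity element to be the class of the constant triple $(x \mapsto \cI_\bl,\, 0,\, 0)$ and, for each remaining axiom, produce an explicit smooth homotopy in $\cF_\bl$ witnessing the corresponding $H$-space axiom of $m_\bl$, together with a primitive differential form $H \in \Om^{\bl-1}(X \times I)$ for the resulting Chern-character discrepancy. The main obstacle is exactly this last requirement: whereas \cite[Lemmas 3.9, 3.23, 3.24(2)]{TWZ2} furnish cochain primitives, here one needs smooth differential form primitives. This is achievable because the TWZ $H$-space structures on $BU \times \Z$ and $U$ are smooth by construction, so pullbacks of $c_\bl$ along smooth homotopies remain smooth closed forms and admit smooth primitives on $X \times I$ via a relative Poincaré lemma applied fibrewise over $X$. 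The full verification is deferred to Section \ref{SEC.GroupHomo.Thm1}, as the author indicates.
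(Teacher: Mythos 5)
Your treatment of the triple relation and of well-definedness matches the paper: the form-level additivity $m_\bl(f_1,f_2)^*c=f_1^*c+f_2^*c$ and the interpolating triple $(m_\bl(F,p^*f_2),H+p^*h_2,p^*(\om_1+\om_2))$ are exactly what is used there. The gap is in your plan for the group axioms. An interpolating triple between, say, $(m_\bl(f_1,m_\bl(f_2,f_3)),h_1+h_2+h_3,\om)$ and $(m_\bl(m_\bl(f_1,f_2),f_3),h_1+h_2+h_3,\om)$ is not just a smooth homotopy $F$ plus \emph{some} primitive $H$ of $F^*c-p^*\om$: the definition requires $H$ to restrict to the given form $h_1+h_2+h_3$ on \emph{both} ends $X\times\{0\}$ and $X\times\{1\}$. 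Writing $H=p^*(h_1+h_2+h_3)+\kappa$, one needs $\kappa$ to be a primitive of $F^*c-p^*g_0^*c$ vanishing on $X\times\{0,1\}$, and the obstruction to finding such a $\kappa$ is precisely the de Rham class of $\cs(F)$ on $X$ (equivalently, the class of $F^*c-p^*g_0^*c$ in $H^\bl(X\times I,X\times\{0,1\})$). A ``relative Poincar\'e lemma applied fibrewise over $X$'' cannot remove this obstruction: for an arbitrary homotopy witnessing the $H$-space axiom, $\cs(F)$ need not be exact (different homotopies change $\cs$ by closed forms with nontrivial classes), so your construction of $H$ would fail in general.

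What the paper does instead, and what your argument is missing, is to choose the \emph{specific} smooth homotopies $\Ga_\bl$ provided by \cite[Lemmas 3.9 and 3.24(2)]{TWZ2}, which satisfy $\cs(\Ga_\bl)=0$; then Lemma \ref{LEM.triple.homotopy.effect.smooth} (a homotopy $F$ replaces $h$ by $h+\cs(F)$) together with the argument of Section \ref{SEC.welldef.smoothHS.TWZ} (via Lemma \ref{LEM.cohomologous.closedform.on.XtimesI}) yields the equivalence of the two triples. Relatedly, your description of the TWZ lemmas as furnishing ``cochain primitives'' is off: those lemmas are statements about smooth maps and Chern--Simons \emph{differential forms}, and their real content---that the associativity/commutativity homotopies can be taken smooth with vanishing $\cs$---is exactly the ingredient your proposal needs but does not supply. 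The same issue recurs for inverses, where one again needs homotopies with controlled Chern--Simons forms rather than a generic primitive.
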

\begin{proof} The RHS of \eqref{eqn.smooth.HS.triple.sum} satisfying the triple axiom follows from $m(f_1,f_2)^*c=f_1^*c+f_2^*c$, and the well-definedness is verified by the same argument as in the proof of Lemma \ref{LEM.group.structure.on.HS}. We give a proof that the operation $+$ is an abelian group operation in Section \ref{SEC.GroupHomo.Thm1}.
\end{proof}

\begin{thm}\label{THM.theorem.1}
Let $c:=c_\bl$ be a universal Chern character form. The assignment \beq\label{EQN.map.in.theorem.1} \check K^\bl(X) &\ra \widehat K^\bl(X)\\
[(f,h,\om)] &\mapsto \left[(f,\int h,\om)\right]
\eeq is an isomorphism of abelian groups that is natural in $X$.
\end{thm}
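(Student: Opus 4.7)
The plan has three parts: checking that the assignment is a well-defined, natural group homomorphism, then proving surjectivity and injectivity.

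For well-definedness, naturality, and the homomorphism property, the key ingredients are Stokes' theorem $\delta \int h = \int dh$ on smooth singular chains, additivity of the integration map $\Om^\bl(X) \to C^\bl(X; \cR)$, and naturality of integration under smooth pullback. Together these show that $(f, \int h, \om)$ satisfies the Hopkins-Singer relation \eqref{EQN.diff.func.rel.2}, that a smooth interpolating triple $(F, H, p^*\om_0)$ is sent to the continuous interpolating triple $(F, \int H, p^*\om_0)$, and that the assignment respects both the group law \eqref{eqn.smooth.HS.triple.sum} and pullback by smooth maps.

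For surjectivity, the plan is to start from $[(f, h, \om)] \in \widehat K^\bl(X)$ and build a preimage in two steps. First, approximate the continuous $f$ by a smooth map $f_{\text{sm}} \simeq f$: using a homotopy $G \colon X \times I \to \cF_\bl$ together with the fact that $X \times I$ deformation retracts to $X$, one produces a cochain primitive $H_G$ of $G^*\int c - \int p^*\om$ extending $h$, and the restriction $h_{\text{sm}} := H_G|_{t=1}$ yields an equivalent Hopkins-Singer triple $(f_{\text{sm}}, h_{\text{sm}}, \om)$. Second, since $f_{\text{sm}}^*c - \om$ is a closed form whose integration is the exact cocycle $\delta h_{\text{sm}}$, the injectivity direction of the de Rham theorem produces $\tilde h \in \Om^{\bl-1}(X)$ with $d\tilde h = f_{\text{sm}}^*c - \om$. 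The cocycle $h_{\text{sm}} - \int \tilde h$ represents a class in $H^{\bl-1}(X; \cR)$; by surjectivity of de Rham, a closed form $\eta \in \Om^{\bl-1}(X)$ can be added to $\tilde h$ so that $h_{\text{sm}} - \int(\tilde h + \eta) = \delta \gamma$ is a coboundary. An explicit interpolating cochain $p^*\int(\tilde h + \eta) + \delta\bigl(p_2^*\alpha \cup p^*\gamma\bigr)$, where $\alpha \in C^0(I;\cR)$ takes values $0$ and $1$ on the endpoints of $I$, then witnesses the Hopkins-Singer equivalence of $[(f, h, \om)]$ with the image of $[(f_{\text{sm}}, \tilde h + \eta, \om)] \in \check K^\bl(X)$.

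For injectivity, suppose $(f_0, h_0, \om), (f_1, h_1, \om) \in \check K^\bl$ have equivalent images in $\widehat K^\bl$ via a continuous interpolating triple $(F, H_c, p^*\om)$. I would first apply smooth approximation relative to $X \times \{0, 1\}$ to replace $F$ by a smooth map $F' \colon X \times I \to \cF_\bl$ with $F'|_{X \times \{i\}} = f_i$. The remaining task is to produce a form $H' \in \Om^{\bl-1}(X \times I)$ with $dH' = F'^*c - p^*\om$ and $H'|_{X \times \{i\}} = h_i$ for $i = 0, 1$. This is a relative boundary-value problem: the closed form $F'^*c - p^*\om$ restricts to $dh_i$ on $X \times \{i\}$, and the obstruction to solving it lies in a relative de Rham cohomology group. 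That obstruction vanishes precisely because the cochain $H_c$, after being transported along the homotopy $F \simeq F'$, provides a cochain-level primitive with the correct boundary values $\int h_i$. Establishing this vanishing via the relative de Rham theorem proved in Appendix \ref{APP.A}, and thereby producing the smooth interpolating triple witnessing the equivalence in $\check K^\bl$, is the main technical hurdle of the proof.
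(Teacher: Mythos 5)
Your proposal is correct and follows essentially the same route as the paper: Stokes/integration for well-definedness and the homomorphism property, smooth approximation plus a homotopy-correction of the cochain and the de Rham theorem for surjectivity, and smooth approximation rel endpoints plus the relative de Rham theorem of Appendix \ref{APP.A} for injectivity. The differences are only in implementation: your explicit coboundary $\delta\bigl(p_2^*\alpha\cup p^*\gamma\bigr)$ plays the role of the paper's Lemma \ref{LEM.cohomologous.cocycle.on.XtimesI}, and your appeal to the acyclicity of the pair $(X\times I, X\times\{0\})$ replaces the paper's explicit slant-product formulas in Lemmas \ref{LEM.difference.of.chern} and \ref{LEM.triple.homotopy.effect}.
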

\begin{proof}
See Section \ref{ProofofTheorem1}.
\end{proof}

\begin{dfn} Let $c:=c_\bl$ be a universal Chern character form and $f_0,f_1:X\ra \cF_\bl$ homotopic smooth maps via a smooth homotopy $F:X\times I\ra \cF_\bl$. The \textbf{Chern-Simons form} of $F$ is \beq\label{EQN.cs.of.htpy} \cs(F):=(-1)^{\bl-1}\int_I \ch(F),\eeq where $\ch(F):=F^*c$. Here $\int_I$ denotes the integration along $I$.
\end{dfn}

\begin{dfn}\label{DEF.CS.equiv} Two smooth maps $f_0,f_1:X\ra \cF$ are \textbf{cs-equivalent} and denoted by $f_0 \sim_{\cs} f_1$ if there exists a smooth homotopy $F:X\times I\ra \cF$ between $f_0$ and $f_1$ such that $\cs(F)\in \Om^*_{\trm{exact}}(X)$.
\end{dfn}
\begin{prp} $\sim_{\cs}$ is an equivalence relation.
\end{prp}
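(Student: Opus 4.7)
The plan is to check the three properties of an equivalence relation directly, using elementary manipulations of smooth homotopies and the definition of $\cs$ as a fiber integral.

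For reflexivity, given $f:X\to\cF$, take the constant homotopy $F:=f\circ p:X\times I\to \cF$. Since $F^{*}c=p^{*}f^{*}c$ has no $dt$-component, the fiber integral $\int_I F^{*}c$ vanishes, so $\cs(F)=0\in\Om^{*}_{\trm{exact}}(X)$.

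For symmetry, given $F:X\times I\to\cF$ realizing $f_0\sim_{\cs}f_1$, let $\bar F(x,t):=F(x,1-t)$. Writing $F^{*}c=\alpha+dt\we\beta$ with $\alpha,\beta$ having no $dt$-component, a short computation shows $\bar F^{*}c=\alpha|_{1-t}-dt\we\beta|_{1-t}$, and the change of variable $s=1-t$ gives $\int_I \bar F^{*}c=-\int_I F^{*}c$. Hence $\cs(\bar F)=-\cs(F)$, which is again exact.

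The only slightly nontrivial step is transitivity. Suppose $f_0\sim_{\cs}f_1$ via $F$ and $f_1\sim_{\cs}f_2$ via $G$. I would first reparametrize both homotopies so that they are constant in $t$ near the endpoints: choose a smooth nondecreasing $\vph:I\to I$ with $\vph(0)=0$, $\vph(1)=1$, and $\vph$ constant near $0$ and $1$, and replace $F,G$ by $F(x,\vph(t))$ and $G(x,\vph(t))$. Since $\vph$ is an orientation-preserving reparametrization, the change-of-variable formula gives $\int_I F(\cdot,\vph(t))^{*}c=\int_I F^{*}c$, so the Chern--Simons form is unchanged. Now the standard concatenation
\beqs
H(x,t):=\begin{cases} F(x,2t), & t\in[0,1/2],\\ G(x,2t-1), & t\in[1/2,1],\end{cases}
\eeqs
is smooth at $t=1/2$ because both pieces are locally constant in $t$ there. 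Splitting the fiber integral at $t=1/2$ and again using change of variables yields $\int_I H^{*}c=\int_I F^{*}c+\int_I G^{*}c$, so $\cs(H)=\cs(F)+\cs(G)$ is a sum of exact forms, hence exact. The main (minor) obstacle is simply keeping track of smoothness at the join; all of the form-level arithmetic is routine once the reparametrization is in place.
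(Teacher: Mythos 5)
Your proof is correct and follows essentially the same route as the paper: reflexivity via the constant homotopy $f\circ p$, symmetry via time reversal $t\mapsto 1-t$, and transitivity via concatenation, with $\cs$ of the concatenation equal to $\cs(F)+\cs(G)$. The one genuine difference is your reparametrization by a $\vph$ constant near the endpoints to make the concatenated homotopy smooth at $t=1/2$ — a point the paper's proof silently skips (its concatenated $H$ is a priori only piecewise smooth, while the definition of $\sim_{\cs}$ asks for a smooth homotopy) — and your check that such a reparametrization leaves $\cs$ unchanged is correct, so this is a welcome refinement rather than a deviation.
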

\begin{proof} $f\sim_{\cs} f$ follows from $\cs(f\circ p)=0$. Suppose $f_0\sim_{\cs}f_1$ with homotopy $F$ between $f_0$ and $f_1$ such that $\cs(F)$ is exact. Define $G(x,t):=F(x,1-t)$. Then $\cs(G)=(-1)^{\bl-1}\int_I\ch(G)=-(-1)^{\bl-1}\int_I\ch(F)=-\cs(F)$, which is exact. Hence $f_1\sim_{\cs}f_0$. Finally, suppose $f_0\sim_{\cs}f_1$ through homotopy $F$, and $f_1\sim_{\cs}f_2$ through homotopy $G$. Define \beqs H(x,t):=\left\{ \begin{array}{ll}
F(x,2t)&\trm{if } t\in [0,\frac{1}{2}]\\
G(x,2t-1) &\trm{if } t\in [\frac{1}{2},1]
\end{array} \right..\eeqs Then $H$ is a homotopy interpolating $f_0$ and $f_2$, and \[\cs(H)=(-1)^{\bl-1}\int_I\ch(H)=(-1)^{\bl-1}\int_I\ch(F)+(-1)^{\bl-1}\int_I\ch(G)\] is an element of $\Om^*_{\trm{exact}}(X)$. Hence $f_0\sim_{\cs}f_2$.
\end{proof}
\begin{nta} We denote by $[f]_{\cs}$ the cs-equivalence class of a map $f:X\ra \cF_\bl$.
\end{nta}
\begin{dfn} The Tradler-Wilson-Zeinalian differential $K$-theory of $X$ is the set $$\widehat K^\bl_{TWZ}(X):=\{[f]_{\cs}:f:X\ra \cF\trm{ is a smooth map.}\}$$ endowed with a structure of abelian group induced by $m_\bl$.
\end{dfn}

\begin{thm}\label{THM.theorem.2}
The assignment \beqs \Phi:\widehat K^\bl_{TWZ}(X) &\ra \check K^\bl(X)\\
[f]_{\cs} &\mapsto [(f,0,\ch(f))]
\eeqs is an isomorphism of abelian groups that is natural in $X$.
\end{thm}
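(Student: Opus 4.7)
The plan is to verify, in order, that $\Phi$ is well-defined, is compatible with the abelian group operations, is injective, and is surjective. Naturality in $X$ is immediate since pullback along a smooth map $\phi: Y \to X$ commutes with the formation of the smooth map $f$, its Chern character $\ch(f)$, and all relevant equivalence relations, so I will not treat it separately.

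For \textbf{well-definedness}, assume $f_0 \sim_\cs f_1$ via a smooth homotopy $F: X \times I \to \cF_\bl$ with $\cs(F) = d\eta$ for some $\eta \in \Om^{\bl-2}(X)$. The transgression identity $d\int_I F^*c = \iota_1^*F^*c - \iota_0^*F^*c$, together with $d\cs(F) = 0$, forces $\ch(f_0) = \ch(f_1) =: \om$, so the $\om$-components on the two sides already match. Writing $F^*c - p^*\om = \gamma_1 + ds \wedge \gamma_2$ with $\gamma_1, \gamma_2$ containing no $ds$, I will set
$$H := \int_0^s \gamma_2\, dt - (-1)^{\bl-1}\, d(s \cdot p^*\eta) \;\in\; \Om^{\bl-1}(X \times I),$$
and verify, using $\iota_0^*(F^*c - p^*\om) = 0$ and $\int_0^1 \gamma_2\, dt = (-1)^{\bl-1} d\eta$, that $(F, H, p^*\om)$ is an interpolating triple between $(f_0, 0, \om)$ and $(f_1, 0, \om)$.

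Compatibility with the group operation reduces to $\ch(m_\bl(f_1, f_2)) = \ch(f_1) + \ch(f_2)$, which is the same multiplicativity of $m_\bl^*c$ already exploited in Lemma~\ref{LEM.group.structure.on.smoothHS}. For \textbf{injectivity}, suppose $\Phi([f]_\cs) = 0 = [(\cI_\bl, 0, 0)]$ in $\check K^\bl(X)$ (the vanishing of $\ch(\cI_\bl)$ follows from the same multiplicativity). Then $\ch(f) = 0$ and there is an interpolating triple $(F, H, 0)$ with $dH = F^*c$, $H|_{s=0} = H|_{s=1} = 0$, and $F$ a smooth homotopy between $\cI_\bl$ and $f$. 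Fiber integration along $I$ together with the fiberwise Stokes identity yields $-d\int_I H = \int_I dH = \int_I F^*c = (-1)^{\bl-1}\cs(F)$, so $\cs(F) = (-1)^\bl d\int_I H$ is exact, whence $[f]_\cs = [\cI_\bl]_\cs = 0$.

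The main work is \textbf{surjectivity}. Given $[(f, h, \om)] \in \check K^\bl(X)$ with $dh = f^*c - \om$, the strategy is to absorb the form $h$ into $f$ by a homotopy. Invoking the cs-deformation property of the models of $\cF_\bl$ from \cite{TWZ2} (Lemma~3.24(2)), I produce a smooth $F: X \times I \to \cF_\bl$ with $F|_{s=0} = f$ and $\cs(F) = (-1)^\bl h$; setting $g := F|_{s=1}$, the transgression identity gives $\ch(g) = \ch(f) - dh = \om$. Then $(F, H, p^*\om)$ with $H := \int_0^s \gamma_2\, dt + p^*h$ (for $\gamma = F^*c - p^*\om$ decomposed as above) provides an interpolating triple between $(f, h, \om)$ and $(g, 0, \om)$, with the boundary values $h$ at $s=0$ and $-h + h = 0$ at $s=1$ emerging precisely from the sign choice in $\cs(F)$. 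Hence $\Phi([g]_\cs) = [(f, h, \om)]$. The principal obstacle is the cs-deformation lemma itself, which encodes the smooth-realization flexibility of the geometric models of $BU \times \Z$ and $U$ constructed in \cite{TWZ2}.
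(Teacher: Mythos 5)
Your proposal is correct and follows the same overall strategy as the paper: check well-definedness, compatibility with $m_\bl$, injectivity by transgressing the interpolating triple along $I$, and surjectivity by realizing the form $h$ (up to sign) as the Chern--Simons form of a homotopy attached to $f$. The main difference is in how you build interpolating triples: you apply the de Rham homotopy operator $\int_0^s\gamma_2\,dt$ to $\gamma=F^*c-p^*\om$ and add an explicit exact correction $d(s\cdot p^*\eta)$ (resp.\ $p^*h$), which merges into a single triple what the paper does in two steps via the reparametrized partial homotopy $G$ of \eqref{EQN.htpy.G} and the forms $\cs(G)$ (Lemma \ref{LEM.triple.homotopy.effect.smooth}), together with Lemma \ref{LEM.cohomologous.closedform.on.XtimesI}; both implementations are valid, and your sign bookkeeping is internally consistent even though your convention for $\int_I$ differs from the paper's by a degree-dependent sign. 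One point needs repair: the key input for surjectivity is not \cite[Lemma 3.24 (2)]{TWZ2} --- that is the associativity-up-to-$\cs$ statement, used here only for the group structure. What you actually need is the paper's Strong Venice Lemma (Lemma \ref{LEM.strongvenice}): TWZ2's Theorem 3.17 (2) only produces a homotopy with prescribed $\cs$ whose end is the basepoint $\cI_\bl$, and one must translate that endpoint to the given $f$ using the $H$-space multiplication $m_\bl$ (and reverse the homotopy to prescribe the end at $s=0$, flipping the sign of $\cs$). With that citation corrected and the short translation step supplied, your argument goes through, and the final identity $\ch(g)=\om$ indeed follows automatically from restricting your triple relation to $s=1$.
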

\begin{proof}
See Section \ref{ProofofTheorem2}.
\end{proof}


\section{Proof of Theorem \ref{THM.theorem.1}}\label{ProofofTheorem1}
\subsection{Well-definedness}\label{SEC.welldef.smoothHS.to.HS} Suppose two triples
$(f_0,h_0,\om_0)$ and $(f_1,h_1,\om_1)$ are equivalent; i.e., $\om_0=\om_1$ and there exists an interpolating triple $(F,H,p^*\om)$ satisfying $dH=F^*c-p^*\om$. Integrating both sides we obtain: $\de \int H=F^*\int c-\int p^*\om$, and hence $(F,\int H,p^*\om)$ is a triple that interpolates between $(f_0,\int h_0,\om_0)$ and $(f_1,\int h_1,\om_1)$.

\subsection{Injectivity}\label{sec.injectivity} For any two triples
$(f_0,h_0,\om_0)$ and $(f_1,h_1,\om_1)$, we assume that $(f_0,\int h_0,\om_0)$ and $(f_1,\int h_1,\om_1)$ are equivalent, i.e., $\om_0=\om_1$, and there exists an interpolating triple $(F,H,p^*\om)$ with the triple relation \beq\label{EQN.triple.rel.XxI}
\de H=F^*\int c-\int p^*\om.
\eeq

We pause our proof and prove a lemma that is necessary in proving injectivity. Recall that, given $f_0$, $f_1$, and $F$ be as in the above paragraph, there exists a homotopy $G:X\times I\times I\ra \cF_\bl$ between $F$ and a smooth approximation $\wbar F$ of $F$, which fixes the ends: $\wbar F(x,0)=F(x,0)=f_0(x)$ and $\wbar F(x,1)=F(x,1)=f_1(x)$. By a smooth approximation $\wbar F$ we mean a smooth map $\wbar F$ that is homotopic to a continuous map $F$ by a continuous homotopy $G$.

\begin{lem} \label{LEM.difference.of.chern}
$F^*\int c-\wbar F^*\int c=\de K$ for some $K$, where $K$ is defined in the proof.
\end{lem}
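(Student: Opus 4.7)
The plan is to build $K$ as a chain-level cochain homotopy, implementing the classical fact that homotopic maps induce cohomologous pullbacks of a cocycle. Since $c = c_\bl$ is closed on $\cF_\bl$, Stokes's theorem gives that $\int c \in C^\bl(\cF_\bl;\cR)$ is a cocycle, and therefore so is its pullback $G^* \int c \in C^\bl(X \times I \times I; \cR)$.

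Let $P: C_{n-1}(X \times I) \to C_n(X \times I \times I)$ denote the standard prism operator that triangulates the cylinder $\sigma \times I$ over each smooth singular $(n-1)$-simplex $\sigma$, and let $j_t: X \times I \hookrightarrow X \times I \times I$ be the slice inclusion $(x,s) \mapsto (x,s,t)$, so that $G \circ j_0 = F$ and $G \circ j_1 = \wbar F$. The classical boundary identity $\partial P + P \partial = j_{1*} - j_{0*}$ dualizes on cochains to $\delta P^* + P^* \delta = j_1^* - j_0^*$. I accordingly set
\begin{equation*}
K := -P^* G^* \int c \;\in\; C^{\bl-1}(X \times I; \cR), \quad \text{explicitly } K(\sigma) := -(G^* \int c)(P(\sigma)).
\end{equation*}

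Applying the dual prism identity to the cocycle $G^*\int c$, the $P^* \delta$ term drops out, yielding
\begin{equation*}
\delta K = -\delta P^* G^* \int c = -(j_1^* - j_0^*) G^* \int c = F^* \int c - \wbar F^* \int c,
\end{equation*}
which is exactly the claimed identity. The only delicate point I foresee concerns $G$ being merely continuous: one must interpret $G^*$ on the smooth singular cochain $\int c$ by smoothing chains before pairing, or equivalently extend $\int c$ to a continuous singular cochain via smoothing first. This is, however, already the convention under which the Hopkins-Singer pullbacks $F^*\int c$ and $\wbar F^*\int c$ appearing in the triple relation \eqref{EQN.triple.rel.XxI} are defined; once fixed compatibly, the rest is a formal consequence of the cocycle property of $\int c$ together with the prism identity.
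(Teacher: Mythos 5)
Your proof is correct and is essentially the paper's own argument: the paper takes $K:=(-1)^{\bl}\,G^*\int c\,\big/\,I$, using the slant product with the fundamental chain of $I$ and its Leibniz formula, which is exactly (up to an overall sign that is immaterial here) your $-P^*G^*\int c$, since slanting with the fundamental chain of $I$ is dual to the prism operator and the derivation formula is the dual of the prism boundary identity. The one extra point the paper records, because it is used later in the injectivity argument, is that its $K$ vanishes on $X\times\{0,1\}$; your $K$ has the same property, since $G$ is constant in the homotopy direction over the ends so the corresponding prisms pair trivially with $G^*\int c$.
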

\begin{proof}
Let $G$ be as in the above paragraph. For $G^*\int c\in C^*(X\times I^2)$, we take the slant product with $I$ and then take the exterior derivative. By the derivation formula for the slant product, we have:
\beqs \de\left(G^*\int c\Big/I\right)&=\left(\de G^*\int c\right)\Big/I+(-1)^{\left|G^*\int c\right|+|I|}G^*\int c\Big/\ptl I\\
&=(-1)^{\bl}\left(F^*\int c-\wbar F^*\int c\right).\eeqs We denote $(-1)^{\bl}G^*\int c\Big/I$ by $K$. Note that $K|_{M\times\{0,1\}}=0$.
\end{proof}

Now we resume our proof. By Lemma \ref{LEM.difference.of.chern}, equation \eqref{EQN.triple.rel.XxI} can be written as $$\de(H-K)=\wbar F^*\int c-\int p^*\om_0.$$

Now consider a differential form \beq\label{EQN.triple.diff.form.correction}
\wbar F^*c-p^*\om_0-d(\f{H}) \eeq where $\f{H}:=(1-t)h_0+t h_1$. This is a closed form on $X\times I$, that vanishes on $X\times \{0,1\}$. Hence $\wbar F^*c-p^*\om_0-d(\f{H})\in \Om_{\trm{cl}}^*(X\times I, X\times \{0,1\}).$ If we integrate \eqref{EQN.triple.diff.form.correction}, we obtain $$\int\wbar F^*c-\int p^*\om_0-\int d(\f{H})=\de(H-K)-\de\int\f{H}.$$ By the relative de Rham theorem (see Appendix \ref{APP.A}), it follows that $\wbar F^*c-p^*\om_0-d(\f{H})=d'\xi$, for some $\xi\in\Om^{*-1}(X\times I)$ where $\xi\equiv 0$ on $X\times \{0,1\}$. (Here $d'$ is the differential of the relative complex --- see Appendix \ref{APP.A}.) We thus have an equality $\wbar F^*c-p^*\om_0=d(\xi+\f{H})$ in $X\times I$, which is the triple relation for $(\wbar F,\xi+\f{H},p^*\om_0)$. This triple interpolates between $(f_0,h_0,\om_0)$ and $(f_1,h_1,\om_1)$.

\subsection{Surjectivity}\label{SEC.surj.smoothhs.to.hs} We consider the special case that the classifying map $f:X\ra \cF_\bl$ is given by a smooth map and then the general case.

\textbf{Case I:} We choose an element $[(f,h,\om)]\in \widehat{K}^\bl(X)$, with the property that there exists a representative $(f,h,\om)$ with $f$ smooth. Consider the triple relation for the representative $(f,h,\om)$ $$\de h=f^*\int c-\int\om.$$
By de Rham theorem, $f^*c-\om=d\xi$ for some differential form $\xi$ on $X$, and hence $\de h=\de\int \xi$. Since $h-\int\xi$ represents a cohomology class, there exists $\eta\in\Om_{\text{cl}}^*(X)$ such that $[h-\int\xi]=[\int\eta]$ or equivalently, $h-\int\xi=\int\eta+\de\mu$ for some cochain $\mu$. We write $\zeta:=\xi+\eta$

Now we consider the triple $(f,\zeta,\om)$. The map \eqref{EQN.map.in.theorem.1} takes this triple to $(f,\int \zeta,\om)$. We claim that $(f,\int \zeta,\om)$ and $(f,\int \zeta+\de\mu,\om)$ are equivalent. The following lemma is standard but we give a proof for sake of completeness.

\begin{lem}\label{LEM.cohomologous.cocycle.on.XtimesI} If $\al$, $\be\in Z^n(X;\cR)$ such that $\al-\be=\de k$ for some $k\in C^{n-1}(X;\cR)$, then there exists $L\in Z^n(X\times I;\cR)$ such that $\psi_0^*L=\al$ and $\psi_1^*L=\be$. (Recall Notation \ref{NTA.1}.)
\end{lem}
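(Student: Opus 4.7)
The plan is to take $L$ of the form $L = p^*\al - \de K$ for a suitably chosen cochain $K \in C^{n-1}(X \times I; \cR)$. Such an $L$ is automatically a cocycle, since $\de p^* \al = p^* \de \al = 0$, and the identities $p \circ \psi_t = \textrm{id}_X$ reduce the required restriction conditions $\psi_0^* L = \al$ and $\psi_1^* L = \be = \al - \de k$ to the simpler conditions $\psi_0^* K = 0$ and $\psi_1^* K = k$.

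Constructing such a $K$ is straightforward because $X \times \{0\}$ and $X \times \{1\}$ are disjoint subsets of $X \times I$. For each smooth singular $(n-1)$-simplex $\sigma: \Delta^{n-1} \to X \times I$, I define
$$K(\sigma) := \begin{cases} k(p \circ \sigma) & \text{if } \sigma(\Delta^{n-1}) \subseteq X \times \{1\}, \\ 0 & \text{otherwise,} \end{cases}$$
and extend $\R$-linearly. This yields a well-defined element of $C^{n-1}(X \times I; \cR)$, since singular cochains (smooth or not) are merely $\R$-linear functionals on the free module on simplices. For any smooth $\tau: \Delta^{n-1} \to X$, the composition $\psi_1 \circ \tau$ has image in $X \times \{1\}$, so $(\psi_1^* K)(\tau) = K(\psi_1 \circ \tau) = k(p \circ \psi_1 \circ \tau) = k(\tau)$; likewise $\psi_0 \circ \tau$ has image in $X \times \{0\}$, so $(\psi_0^* K)(\tau) = 0$.

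Combining the two steps, $L := p^* \al - \de K$ satisfies all three required conditions: $L \in Z^n(X \times I; \cR)$, $\psi_0^* L = \al - \de(\psi_0^* K) = \al$, and $\psi_1^* L = \al - \de(\psi_1^* K) = \al - \de k = \be$, the last equality using the hypothesis $\al - \be = \de k$. There is no serious obstacle: the construction reduces to a direct piecewise definition made possible by the disjointness of the two slices together with the fact that singular cochains carry no continuity or smoothness constraints that would forbid prescribing their values simplex-by-simplex.
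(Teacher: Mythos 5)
Your proof is correct, but it takes a genuinely different route from the paper. The paper gives $L$ by an explicit formula: it treats $I$ as a CW complex with dual cochains $a^*,b^*,e^*$ and sets $L=p_1^*\al\cup p_2^*a^*+p_1^*\be\cup p_2^*b^*+(-1)^np_1^*k\cup p_2^*e^*$, then checks $\de L=0$ and the two restrictions by cup-product calculus; this is the cochain-level analogue of the smooth formula $\xi=(1-t)p^*\al+tp^*\be-dt\we p^*\ga$ used in Lemma \ref{LEM.cohomologous.closedform.on.XtimesI}, and it expresses $L$ entirely through natural operations (pullback and cup product). You instead set $L=p^*\al-\de K$, where $K$ is prescribed simplex-by-simplex (equal to $k(p\circ\sigma)$ on simplices landing in $X\times\{1\}$, zero otherwise), exploiting the fact that singular cochains are arbitrary linear functionals on simplices, so the two disjoint slices can be treated independently; since $p\circ\psi_t=\mathrm{id}_X$, the restriction conditions reduce to $\psi_0^*K=0$ and $\psi_1^*K=k$, which hold by construction. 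Your argument is shorter and avoids both the CW structure on $I$ and the sign bookkeeping of cup products, at the price of using an ad hoc, non-natural cochain $K$; the paper's version is the one that parallels the differential-form lemma and keeps the construction functorial in spirit. One small point of care: since coefficients lie in $\cR=\R[u,u^{-1}]$ and cochains are graded by total degree, $k$ is a finite sum of homogeneous singular cochains times powers of $u$, so your definition of $K$ should be read componentwise in each singular degree; this is the same implicit convention the paper uses and does not affect the argument.
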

\begin{proof} Consider an interval $I$ as a CW complex with two $0$-cells $a$, $b$, and a $1$-cell $e$. We define cochains $e^*\in C^1(I;\cR)$ by $e\mapsto 1$, $a^*\in C^0(I;\cR)$ by $a\mapsto 1$ and $b\mapsto 0$, and $b^*\in C^0(I;\cR)$ by $a\mapsto 0$ and $b\mapsto 1$. Then, for any $0$-cell $v$ in $X$, $$(\psi_0^*p_2^*)a^*(v)=a^*({p_2}_*{\psi_0}_*v)=a^*(a)=1,$$ and similarly,\beqs
(\psi_0^*p_2^*)b^*(v)&=b^*(a)=0\\
(\psi_1^*p_2^*)a^*(v)&=a^*(b)=0\\
(\psi_1^*p_2^*)b^*(v)&=b^*(b)=1.
\eeqs Also, for any $1$-cell $\si$ in $X$, $$(\psi_0^*p_2^*)e^*(\si)=e^*({p_2}_*{\psi_0}_*(\si))=e^*(0)=0,$$ and similarly, $(\psi_1^*p_2^*)e^*(\si)=0$. We also have:\beqs
\de a^*(e) &= a^*\ptl(e) = a^*(b-a)=-1\quad\Leftrightarrow\quad \de a^*=-e^*\\
\de b^*(e) &= b^*\ptl(e) = b^*(b-a)=1\quad\Leftrightarrow\quad \de b^*=e^*.
\eeqs Since there is no $2$-cell in $I$, we also have $\de e^*=0$.

Now we take $L:=p_1^*\al\cup p_2^*a^*+p_1^*\be \cup p_2^*b^*+(-1)^np_1^*k\cup p_2^*e^*$. We see that:\beqs
\de L&=(-1)^np_1^*\al\cup \de p_2^*a^*+(-1)^np_1^*\be \cup \de p_2^*b^*+(-1)^n\de p_1^*k\cup p_2^*e^*+0\\ &= (-1)^n\left(-p_1^*\al\cup p_2^*e^*+p_1^*\be \cup p_2^*e^*+\de p_1^*k\cup p_2^*e^*\right)=0,\trm{since $\de k=\al-\be$,}
\eeqs and also: \beqs \psi_0^*L&=(\psi_0^*p_1^*)\al\cup (\psi_0^*p_2^*)a^*+(\psi_0^*p_1^*)\be \cup (\psi_0^*p_2^*)b^*+(-1)^n(\psi_0^*p_1^*)k\cup (\psi_0^*p_2^*)e^*\\
&=\al\cup 1+\be \cup 0+0=\al,\\
\psi_1^*L&=(\psi_1^*p_1^*)\al\cup (\psi_1^*p_2^*)a^*+(\psi_1^*p_1^*)\be \cup (\psi_1^*p_2^*)b^*+(-1)^n(\psi_1^*p_1^*)k\cup (\psi_1^*p_2^*)e^*\\
&=\al\cup 0+\be \cup 1+0=\be.
\eeqs
\end{proof}

We continue proof of surjectivity. By Lemma \ref{LEM.cohomologous.cocycle.on.XtimesI}, there exists a cocycle $L$ on $X\times I$ such that $\psi_0^*L=0$ and $\psi_1^*L=\de\mu$. Using this $L$, we form the triple $(f\circ p,p^*\int\zeta+L,p^*\om)$. This triple restricts to $(f,\int \zeta,\om)$ and $(f,\int \zeta+\de\mu,\om)$ at each end. Furthermore, $$\de \left(p^*\int\zeta+L\right)=p^*\de \int\xi= p^*\left(f^*\int c-\int \om\right)=(f\circ p)^*\int c-\int p^*\om,$$ where in the first equality, we used the fact that $\eta$ is a closed form. Hence the claim. Therefore, for any $[(f,h,\om)]\in \widehat K^\bl(X)$ with smooth $f$, there is an element $[(f,\zeta,\om)]\in \check K^\bl(X)$ in the preimage.

\textbf{Case II:} We consider a triple whose classifying map is not necessarily smooth. Given any $[(f,h,\om)]\in \widehat K^\bl(X)$, it suffices to show that there exists a triple $(\wbar f,h',\om)$, with a smooth classifying map $\wbar f$, equivalent to $(f,h,\om)$. We consider a smooth approximation $\wbar f$ of $f$ satisfying that $\wbar f$ and $f$ are homotopic through a homotopy $g$ with $\wbar f=g(-,1)$.

\begin{lem}\label{LEM.triple.homotopy.effect} Let $(f_0,h,\om)$ be a triple representing an element of $\widehat{K}^\bl(X)$. Suppose $f_0$ is homotopic to $f_1$ via a homotopy $F$. Then the triple $(f_0,h,\om)$ is equivalent to $\left(f_1,h+(-1)^{|c|+1}F^*\int c \Big/ I,\om\right)$.
\end{lem}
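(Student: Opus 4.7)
The plan is to construct an explicit interpolating triple $(F, H, p^*\om)$ on $X \times I$, where $F$ is the given homotopy and $H$ is obtained from $p^*h$ together with a transgression cochain built via a slant product. Concretely, I introduce the auxiliary map $G: X \times I_t \times I_s \to \cF_\bl$ defined by $G(x, t, s) := F(x, st)$. By construction, $G|_{s=0} = f_0 \circ p$ and $G|_{s=1} = F$ as maps $X \times I_t \to \cF_\bl$, while $G|_{t=0}$ is constant in $s$ (equal to $f_0(x)$) and $G|_{t=1}$ is the map $(x, s) \mapsto F(x, s)$.

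I then set $K := (-1)^{|c|+1} G^*\int c \Big/ I_s$, a cochain on $X \times I_t$ of degree $\bl - 1$, and define $H := p^*h + K$. Invoking the same slant-product derivation formula $\de(\al/I_s) = (\de \al)/I_s + (-1)^{|\al|+|I_s|}\al/\ptl I_s$ used in the proof of Lemma \ref{LEM.difference.of.chern}, together with the closedness of $G^*\int c$, yields $\de K = F^*\int c - (f_0 \circ p)^*\int c$. Combining this with $\de(p^*h) = p^*\de h = (f_0 \circ p)^*\int c - \int p^*\om$ gives the desired triple relation $\de H = F^*\int c - \int p^*\om$ for $(F, H, p^*\om)$.

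To finish, I verify the two boundary restrictions using naturality of the slant product under restriction in the $t$-variable. At $t = 0$, the restricted cochain factors through the projection $X \times I_s \to X$, and its slant product with the $1$-chain $I_s$ vanishes for degree reasons, so $\psi_0^* H = h$. At $t = 1$, the restriction of $G$ is $F$ viewed as a map $X \times I_s \to \cF_\bl$, so the same naturality gives $\psi_1^* K = (-1)^{|c|+1} F^*\int c / I$, hence $\psi_1^* H$ equals the prescribed cochain $h + (-1)^{|c|+1} F^*\int c / I$. The main technical point is the careful bookkeeping of the slant-product sign and the naturality of slant product under restriction in an independent factor; once $G$ is chosen so that its $t=0$ and $t=1$ restrictions collapse correctly, the rest is a direct application of the machinery already developed for Lemma \ref{LEM.difference.of.chern}, and the form component $p^*\om$ serves as the obvious interpolation.
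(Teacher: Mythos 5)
Your proposal is correct and follows essentially the same route as the paper: both construct the interpolating triple $\left(F,\,p^*h+(-1)^{|c|+1}G^*\int c\big/I,\,p^*\om\right)$ from an auxiliary two-parameter homotopy $G$ between $f_0\circ p$ and $F$, verify the triple relation via the slant-product Leibniz rule, and check the end restrictions using naturality of the slant product together with the vanishing of the slant of a cochain pulled back from $X$. The only difference is cosmetic --- you take $G(x,t,s)=F(x,st)$ where the paper takes $G(x,t,s)=F(x,\min(t,s))$ --- and both choices have the same boundary behavior, so the argument goes through unchanged.
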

\begin{proof} We take an interpolating triple $\left(F,p^*h+(-1)^{|c|+1}G^*\int c\Big/I,p^*\om\right)$, where $G$ is a homotopy between $F$ and $f_0\circ p$ defined by \beq\label{EQN.htpy.G} G: X\times I\times I &\ra \cF_\bl\\
(x,t,s)&\mapsto G(x,t,s):=\left\{ \begin{array}{ll}
F(x,t)\quad\trm{if } t\leq s\\
F(x,s) \quad\trm{if } t\geq s
\end{array} \right..\eeq In particular, $G(x,t,0)=F(x,0)=f_0\circ p$ and $G(x,t,1)=F(x,t)=F$.

Let $\wtl{\psi_s}:X\times I \ra X\times I\times I$ be a $s$-slice map defined by $(x,t)\mapsto (x,t,s)$.

Since \beqs
\wtl \psi_0^*\left(G^*\int c\Big/I\right)&=(G\circ\wtl{\psi_0})^*\int c\Big/I=(f_0\circ p)^*\int c\Big/I=0\\
\wtl \psi_1^*\left(G^*\int c\Big/I\right)&=(G\circ\wtl{\psi_1})^*\int c\Big/I=F^*\int c\Big/I,
\eeqs the triple $\left(F,p^*h+(-1)^{|c|+1}G^*\int c\Big/I,p^*\om\right)$ interpolates the between given two triples. We verify the triple relation:

\beqs
&\de\left(p^*h+(-1)^{|c|+1}G^*\int c\Big/I \right)\\&\quad=p^*\de h +\de\left((-1)^{|c|+1}G^*\int c\Big/I\right)\\
&\quad=p^*(f_0^*\int c-\int\om)+(-1)^{|c|+1}\de G^*\int c\Big/I +G^*\int c\Big/\ptl I\\
&\quad=p^*f_0^*\int c-\int p^*\om+\left(\wtl{\psi_1}^*G^*\int c-\wtl{\psi_0}^*G^*\int c \right)\\
&\quad=p^*f_0^*\int c-\int p^*\om+\left(F^*\int c-(f_0\circ p)^*\int c \right)
=F^*\int c-\int p^*\om.
\eeqs
\end{proof}

Therefore, by Lemma \ref{LEM.triple.homotopy.effect}, the triple $\left(\wbar f,h+(-1)^{|c|+1}F^*\int c \Big/ I,\om\right)$ is equivalent to $(f,h,\om)$. Now a preimage of $[(\wbar f,h',\om)]$ can be found, by Case I.

\subsection{Group homomorphism and naturality}\label{SEC.GroupHomo.Thm1} We first prove that \eqref{eqn.HS.triple.sum} and \eqref{eqn.smooth.HS.triple.sum} is an abelian group operation as claimed in Lemmas \ref{LEM.group.structure.on.HS} and \ref{LEM.group.structure.on.smoothHS}, respectively. Note that $(\cI_\bl,0,0)$ is the identity. The existence of inverses will follow from similar arguments. We prove associativity presently.

Consider any three triples $(f_1,h_1,\om_1)$, $(f_2,h_2,\om_2)$, and $(f_3,h_3,\om_3)$ in $\widehat K^\bl(X)$. By the argument in Case II in Section \ref{SEC.surj.smoothhs.to.hs}, we may assume that $f_1$, $f_2$, and $f_3$ are smooth. Consider the following triples
\beq\label{EQN.Associativity.triples} &(m_\bl(f_1,m_\bl(f_2,f_3)),h_1+h_2+h_3,\om_1+\om_2+\om_3)\\
&(m_\bl(m_\bl(f_1,f_2),f_3)),h_1+h_2+h_3,\om_1+\om_2+\om_3)\eeq


By \cite[Lemmas 3.24 (2) and 3.9]{TWZ2} for $m_0$ and $m_1$, respectively, two maps $m_\bl(f_1,m_\bl(f_2,f_3))$ and $m_\bl(m_\bl(f_1,f_2),f_3)$ are cs-equivalent such that $\cs(\Ga_\bl)=0$ for some homotopy $\Ga_\bl$ between these maps. Smoothness of the homotopy $\Ga_\bl$ also follows from \cite[Lemmas 3.24 (2) and 3.9]{TWZ2}. From Lemmas \ref{LEM.triple.homotopy.effect} (using the homotopy $\Ga_\bl$) and \ref{LEM.cohomologous.cocycle.on.XtimesI}, it follows that the triples in \eqref{EQN.Associativity.triples} are equivalent.

Now suppose any three triples $(f_1,h_1,\om_1)$, $(f_2,h_2,\om_2)$, and $(f_3,h_3,\om_3)$ are in $\check K^\bl(X)$. Again $m_\bl(f_1,m_\bl(f_2,f_3))\sim_{\text{cs}} m_\bl(m_\bl(f_1,f_2),f_3)$ by the same reason, and triples again of the form \eqref{EQN.Associativity.triples} are equivalent by a similar argument in Section \ref{SEC.welldef.smoothHS.TWZ} below and the fact that $\cs(\Ga_\bl)=0$.

It is readily seen that the map \eqref{EQN.map.in.theorem.1} is a group homomorphism. It is natural in $X$ by the change of variables formula.


\section{Proof of Theorem \ref{THM.theorem.2}}\label{ProofofTheorem2}

\subsection{Well-definedness}\label{SEC.welldef.smoothHS.TWZ} Suppose $f_0\sim_{\cs}f_1$ through a homotopy $F$. We have to show that two triples $(f_0,0,\ch(f_0))$ and $(f_1,0,\ch(f_1))$ in $\check K^\bl(X)$ are equivalent. Since $\cs(F)$ is exact, it follows that $\ch(f_0)=\ch(f_1)$. We define an interpolating triple by $(F,\cs(G),p^*\ch(f_0))$, where $G$ is a homotopy between $f_0\circ p$ and $F$ defined in \eqref{EQN.htpy.G}. We have the triple relation $$ d\cs(G)=\ch(F)-\ch(f_0\circ p),$$ and the triple $(F,\cs(G),p^*\ch(f_0))$ becomes $(f_0,\cs(f_0\circ p\circ \wtl\psi_0),\ch(f_0))$ (resp. $(f_1,\cs(F),\ch(f_0))$) when it is restricted to $X\times\{0\}$ (resp. $X\times \{1\}$). We claim that triples $(f_1,\cs(F),\ch(f_0))$ and $(f_1,0,\ch(f_0))$ are equivalent. This can be easily verified by applying the following Lemma.

\begin{lem}\label{LEM.cohomologous.closedform.on.XtimesI} If $\al$, $\be\in \Om^n_{\trm{cl}}(X)$ are such that $\al-\be=d\ga$ for some $\ga\in\Om^{n-1}(X)$, then there exists $\xi\in\Om^n_{\trm{cl}}(X\times I)$ such that $\psi_0^*\xi=\al$ and $\psi_1^*\xi=\be$.
\end{lem}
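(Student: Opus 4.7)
The plan is to give an explicit smooth interpolation between $\al$ and $\be$ on $X\times I$, mirroring the cellular construction of Lemma \ref{LEM.cohomologous.cocycle.on.XtimesI} but in the de Rham world. Since the cochain proof used dual cells on $I$ with $\de a^\ast=-e^\ast$ and $\de b^\ast=e^\ast$, the natural smooth replacement is a pair of bump-like functions $\chi_0,\chi_1:I\to\R$ with $\chi_0+\chi_1=1$, $\chi_0(0)=1$, $\chi_0(1)=0$. Equivalently, one can work with a single smooth cut-off.

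More concretely, I would pick a smooth function $\chi:I\to\R$ with $\chi(0)=1$ and $\chi(1)=0$, and define
\beq\label{EQN.xi.definition}
\xi := p^\ast\be + d\bigl(\chi\cdot p^\ast\ga\bigr)\in\Om^n(X\times I),
\eeq
where $\chi$ is regarded as a function on $X\times I$ via pullback along the projection to $I$. Closedness of $\xi$ is immediate: $d\xi = d\,p^\ast\be + d^2(\chi\cdot p^\ast\ga)=p^\ast d\be=0$, since $\be$ is closed.

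For the boundary values, expand $d(\chi\cdot p^\ast\ga) = \chi'\,dt\we p^\ast\ga + \chi\cdot p^\ast d\ga$. Pulling back by $\psi_t:X\embed X\times I$ kills the $dt$-term (because $\psi_t$ is constant in the $I$-coordinate) and leaves $\chi(t)\cdot d\ga$ from the second term, while $\psi_t^\ast p^\ast\be=\be$ since $p\circ\psi_t=\text{id}_X$. Therefore
\beqs
\psi_0^\ast\xi &= \be+\chi(0)\,d\ga = \be+d\ga = \al,\\
\psi_1^\ast\xi &= \be+\chi(1)\,d\ga = \be,
\eeqs
using the hypothesis $\al-\be=d\ga$.

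There is no real obstacle here; the only thing to be careful about is that the $dt$-piece arising from $d\chi$ vanishes under the slice pullbacks $\psi_0^\ast,\psi_1^\ast$, which is automatic since these maps have constant $t$-coordinate. The construction is manifestly natural in $X$, so this lemma will feed smoothly into the well-definedness argument of Section \ref{SEC.welldef.smoothHS.TWZ} where it is invoked to replace the Chern--Simons correction term $\cs(F)$ by zero.
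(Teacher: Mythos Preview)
Your proof is correct and is essentially the paper's argument: taking $\chi(t)=1-t$ in your formula and expanding $d(\chi\cdot p^\ast\ga)$ yields exactly the paper's $\xi=(1-t)p^\ast\al+t\,p^\ast\be-dt\we p^\ast\ga$. Your packaging as ``closed plus exact'' makes $d\xi=0$ immediate, whereas the paper checks it by a one-line computation, but the content is the same.
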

\begin{proof} Set $\xi:=(1-t)p^*\al+tp^*\be-dt\we p^*\ga\in \Om^n(X\times I)$. Then $d\xi=-dt\we p^*\al+dt\we p^*\be+dt\we p^*d\ga=0$, $\psi_0^*\xi=\al$, and $\psi_1^*\xi=\be$.
\end{proof}

Since $\cs(F)$ is exact, we may write $\cs(F):=d\mu$. We apply Lemma \ref{LEM.cohomologous.closedform.on.XtimesI} with $\al=d\mu$ and $\be=0$. More explicitly, the interpolating triple between $(f_1,\cs(F),\ch(f_0))$ and $(f_1,0,\ch(f_0))$ is $(f_1\circ p,\xi,p^*\ch(f_1))$ where $$\xi:=(1-t)p^*d\mu-dt\we p^*\mu.$$ We see that $d\xi=-dt\we p^*d\mu+dt\we p^*d\mu=0$, $\psi^*_0\xi=d\mu$, and $\psi^*_1\xi=0$. The triple relation is easily verified: $d\xi=0=\ch(f_1\circ p)-p^*\ch(f_1)$. Thus the map is well-defined.

\subsection{Injectivity} Suppose two triples $(f_0,0,\ch(f_0))$ and $(f_1,0,\ch(f_1))$ in $\check K^\bl(X)$ are equivalent. i.e. $\ch(f_0)=\ch(f_1)$ and there exists a homotopy $F$ between $f_0$ and $f_1$, such that $$dH=\ch(F)-p^*\ch(f_0)$$ for some differential form $H\in\Om^{\bl-1}(X\times I)$. Integrating both sides along $I$, we get \beqs \int_I \ch(F)-0 &=\int_I \ch(F)-\int_I p^*\ch(f_0) = \int_I dH+0 \\& =\int_I dH +(-1)^{|H|-1}\int_{\ptl I}H = d\int_I H. \eeqs This shows that $\cs(F)$ is exact, and hence $f_0\sim_{\cs}f_1$.

\subsection{Surjectivity} We need two lemmas. The following lemma follows from \cite{TWZ2}.
\begin{lem}[Strong Venice Lemma] \label{LEM.strongvenice} Given a differential form $h\in\Om^{\bl-1}(X)$ and a smooth map $f_1:X\ra \cF_\bl$, there exists a smooth map $f_0:X\ra \cF_\bl$ and a smooth homotopy $F:X\times I\ra \cF_\bl$ between $f_0$ and $f_1$ such that $\cs(F)=h$.
\end{lem}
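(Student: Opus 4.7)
The plan is to reduce, using the $H$-space structure on $\cF_\bl$, to the normalized case $f_1 = \cI_\bl$, and then construct the required homotopy by explicit formulas inside the concrete models of $\cF_\bl$ from \cite{TWZ2}. Suppose a smooth map $\gamma \colon X \times I \to \cF_\bl$ with $\gamma(\cdot,1) = \cI_\bl$ and $\cs(\gamma) = h$ has been produced in the normalized case; I would then set $f_0' := \gamma(\cdot,0)$ and define $F \colon X \times I \to \cF_\bl$ by $F(x,t) := m_\bl(f_1(x),\gamma(x,t))$. This gives $f_0 := F(\cdot,0) = m_\bl(f_1,f_0')$ and $F(\cdot,1) = m_\bl(f_1,\cI_\bl)$, and using the smooth cs-trivial homotopy $m_\bl(f_1,\cI_\bl) \simeq f_1$ supplied by the $H$-space unit property, I would concatenate to obtain a homotopy from $f_0$ to $f_1$. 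Additivity of the universal Chern character form under the multiplication, $m_\bl^*c = p_1^*c + p_2^*c$, yields $F^*c = p^*f_1^*c + \gamma^*c$, and integration along $I$ gives $\cs(F) = \cs(\gamma) = h$ as required.

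For the normalized case $f_1 = \cI_\bl$, I would construct $\gamma$ explicitly in each parity. For $\bl = 1$, writing $h = \sum_{n \geq 0} h_{2n} \cdot u^{-n}$ with $h_{2n} \in \Om^{2n}(X)$, I would take $\gamma(x,t)$ to be a smooth path in the stabilized unitary group ending at the identity, built component-wise from the $h_{2n}$ via the exponential; the explicit formula for $c_1$ in Notation \ref{NTA.2} then, upon integration along $I$, recovers the prescribed $h_{2n}$ in each Bott degree. For $\bl = 0$, a parallel construction uses a smooth path of projections in $BU \times \Z$, with $c_0 = \ch(\na_{\trm{univ}})$ transgressing degree-by-degree to produce $h$.

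The main obstacle will be ensuring $\cs(\gamma) = h$ on the nose rather than merely modulo exact forms, since the naive transgression computation typically yields only the latter. I plan to absorb any residual exact correction $d\mu$ by pre-concatenating $\gamma$ with a smooth self-homotopy of $\cI_\bl$ whose Chern--Simons form equals $-d\mu$; the existence of such self-homotopies realizing arbitrary exact forms follows from a Lemma \ref{LEM.cohomologous.closedform.on.XtimesI}-type construction transported into $\cF_\bl$, together with the relative de Rham argument of Appendix \ref{APP.A}. Naturality in $X$ and smoothness of the resulting $F$ then follow from smoothness of $m_\bl$ and of the exponential/projection paths used in \cite{TWZ2}.
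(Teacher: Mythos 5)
Your first paragraph is essentially the paper's own reduction: the paper defines $F(x,t):=m_\bl(G(x,t),p(f_1(x)))$, where $G$ is a smooth homotopy ending at $\cI_\bl$ with $\cs(G)=h$, and concludes via additivity of $c$ under $m_\bl$ together with the vanishing of $\int_I$ on forms pulled back from $X$; in the model of \cite{TWZ2} the unit behaves well enough that no extra cs-trivial correction homotopy is needed, though your concatenation step is harmless.

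The genuine gap is in your normalized case. The existence of a smooth homotopy $\gamma:X\times I\ra\cF_\bl$ with $\gamma(\cdot,1)=\cI_\bl$ and $\cs(\gamma)=h$ \emph{on the nose}, for an arbitrary $h\in\Om^{\bl-1}(X)$, is exactly Theorem 3.17\,(2) of \cite{TWZ2}, and the paper's proof consists of citing that result. Your second and third paragraphs are an attempt to reprove it, and the sketch does not go through: exponential paths in $U$, or paths of projections in $BU\times\Z$, transgress only to a very restricted class of forms built from the chosen data and their derivatives, and hitting an arbitrary prescribed $h$ (even modulo exact forms) is precisely the hard content of the Tradler--Wilson--Zeinalian construction. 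Moreover, your proposed repair --- absorbing a residual exact error $d\mu$ by pre-concatenating with a self-homotopy of $\cI_\bl$ whose Chern--Simons form is $-d\mu$ --- is itself an instance of the statement being proved (realizing an arbitrary exact form as $\cs$ of a homotopy at the basepoint), and it cannot be deduced from Lemma \ref{LEM.cohomologous.closedform.on.XtimesI} or the relative de Rham theorem of Appendix \ref{APP.A}: those results only produce differential forms or cochains on $X\times I$, not maps into $\cF_\bl$ with prescribed transgression. So the key existence statement is assumed rather than proved; either invoke \cite{TWZ2}, Theorem 3.17\,(2), as the paper does, or reproduce that construction in full.
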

\begin{proof} Given such a differential form $h$, Theorem 3.17 (2) of \cite{TWZ2} shows that there exists a homotopy $G:X\times I\ra \cF_\bl$ such that $G(x,1)=\cI_\bl$ and $\cs(G)=h$. (See Notation \ref{NTA.2} for the definition of $\cI_\bl$.)

Accordingly, define a homotopy $F:X\times I\ra \cF_\bl$ by $$F(x,t):= m_\bl(G(x,t),p(f_1(x))).$$ Note that $F(x,1)=m_\bl(G(x,1),p(f_1(x)))=f_1(x)$ and $$\cs(F)=(-1)^{\bl-1}\int_I\ch(G)+(-1)^{\bl-1}\int_I p\circ f_1=\cs(G)+0=h.$$ At $t=0$, $F(x,0)=m_\bl(G(x,0),p(f_1(x)))$ which we denote by $f_0(x)$.
\end{proof}

\begin{lem}\label{LEM.triple.homotopy.effect.smooth} Let $(f_0,h,\om)$ be a triple representing an element of $\check K^\bl(X)$. Suppose $f_0$ is homotopic to $f_1$ via a homotopy $F$. Then the triple $(f_1,h+\cs(F),\om)$ is equivalent to $(f_0,h,\om)$.
\end{lem}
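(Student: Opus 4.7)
The plan is to mimic the proof of Lemma \ref{LEM.triple.homotopy.effect}, working in the smooth setting and replacing the slant product against the fundamental chain of $I$ with integration of differential forms along the fiber. I would first construct a smooth two-parameter homotopy $G:X\times I\times I\ra \cF_\bl$ whose restrictions to $X\times I\times\{0\}$ and $X\times I\times\{1\}$ are $f_0\circ p$ and $F$, respectively. The min/max-style homotopy of \eqref{EQN.htpy.G} is only continuous, so a convenient smooth substitute is
\[G(x,t,s):=F(x,ts),\]
which satisfies $G(x,t,0)=f_0(x)$, $G(x,t,1)=F(x,t)$, $G(x,0,s)=f_0(x)$, and $G(x,1,s)=F(x,s)$.

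Next, I would propose the interpolating triple $(F,H,p^*\om)$ with
\[H:=p^*h+(-1)^{\bl-1}\int_I G^*c,\]
where $\int_I$ denotes integration along the inner $I$-factor (the $t$-direction), producing $H\in\Om^{\bl-1}(X\times I)$, with the remaining $I$ parametrizing the interpolation. The boundary checks are straightforward: at $s=0$, the form $G^*c$ restricts to $(f_0\circ p)^*c$, which has no $dt$-component and thus vanishing fiber integral, so the interpolating triple restricts to $(f_0,h,\om)$; at $s=1$, the fiber integral is $(-1)^{\bl-1}\int_I F^*c=\cs(F)$, yielding $(f_1,h+\cs(F),\om)$.

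Finally, I would verify the triple relation $dH=F^*c-p^*\om$. Applying the Stokes formula for fiber integration along the $t$-factor and using $dc=0$ gives $d\int_I G^*c=(-1)^{\bl-1}(F^*c-p^*f_0^*c)$; combining this with $p^*dh=p^*(f_0^*c-\om)$, the $p^*f_0^*c$ terms cancel and we obtain $dH=F^*c-p^*\om$, as desired. The main obstacle is bookkeeping the signs in the Stokes formula for fiber integration and confirming that they are compatible with the $(-1)^{\bl-1}$ in the definition of $\cs(F)$. Apart from this, the argument is a direct smooth transcription of the continuous construction used in Lemma \ref{LEM.triple.homotopy.effect}.
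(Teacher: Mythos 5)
Your proof is correct and takes essentially the same route as the paper, which also interpolates via a triple of the form $(F,\,p^*h+\cs(G),\,p^*\om)$ for a two-parameter homotopy $G$ between $f_0\circ p$ and $F$, checking the two ends and the triple relation exactly as you do. The only difference is that the paper reuses the homotopy of \eqref{EQN.htpy.G} (a $\min(t,s)$-type formula, which is not smooth along $t=s$), whereas your smooth substitute $G(x,t,s)=F(x,ts)$ makes the fiber-integration/Stokes step cleaner; your sign bookkeeping is consistent with the paper's convention $d\,\cs(F)=\ch(f_1)-\ch(f_0)$.
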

\begin{proof} We choose $(F,p^*h+\cs(G),p^*\om)$, where $G$ is a homotopy between $F$ and $f_0\circ p$ defined in \eqref{EQN.htpy.G}. We verify that this triple interpolates between $(f_0,h,\om)$ and $(f_1,h+\cs(F),\om)$. First, we have \beqs
\wtl \psi_0^*(F,p^*h+\cs(G),p^*\om)&=(f_0,h+\cs(G\circ \wtl \psi_0),\wtl \psi_0^*p^*\om)=(f_0,h,\om),\\
\wtl \psi_1^*(F,p^*h+\cs(G),p^*\om)&=(f_1,h+\cs(G\circ \wtl \psi_1),\wtl \psi_1^*p^*\om)=(f_1,h+\cs(F),\om),
\eeqs where $\wtl \psi_s$ is as defined in the proof of Lemma \ref{LEM.triple.homotopy.effect}.

The triple condition then follows from
$$d\left(p^*h+\cs(G)\right)=p^*f_0^*c-p^*\om+F^*c-p^*f_0^*c=F^*c-p^*\om.$$
\end{proof}

We now prove surjectivity. Take any representative $(f_1,h,\om)$ of any element in $\check K^\bl(X)$. Applying Lemma \ref{LEM.strongvenice} with $h$ and $f_1$, we may write $h=\cs(F)$ where $F$ is a homotopy between $f_0$ and $f_1$ for some $f_0$. Note that $\ch(f_0)=\om$, because $$\ch(f_1)-\ch(f_0)=dh=\ch(f_1)-\om.$$ By Lemma \ref{LEM.triple.homotopy.effect.smooth}, the triple $(f_0,0,\om)$ is equivalent to the triple $(f_1,\cs(F),\om)$. Since $(f_0,0,\om)$ is a representative of the image of $[f_0]\in \wht K^{\bl}_{\trm{TWZ}}(X)$, $[f_0]$ is a preimage of $(f_1,h,\om)\in \check K^\bl(X)$.

\subsection{Group homomorphism and naturality}\label{SEC.GroupHomo.Thm2} The given map is a group homomorphism since $\ch(m(f_0,f_1))=\ch(f_0)+\ch(f_1)$. It is natural in $X$ by the naturality of $\ch$.

\appendix
\section{Relative de Rham theorem} \label{APP.A}
We state and prove a variant of de Rham theorem for relative cohomology groups. This result is certainly well-known and classical, but we did not find a reference. Throughout this appendix, let $X$ be a smooth manifold, $Y$ a closed submanifold, and $\imath:Y\embed X$ a smooth embedding.

\begin{dfn} The \textbf{relative de Rham complex} $(\Om^\bl(X,Y),d'_\bl)$ is defined by $\Om^k(X,Y):=\ker \imath_k^*$ and $d'_k:=d|_{\ker \imath_k^*}$ for each $k\geq 0$, where $\imath_k^*:\Om^k(X)\ra \Om^k(Y)$ is the restriction map.
\end{dfn}

Since $\imath^*$ commutes with $d$, the image of $d_k'$ is contained in $\Om^{k+1}(X,Y)$, hence the pair $(\Om^{\bl}(X,Y),d_\bl')$ is a complex. Also note that the kernel of $d'_k$ is $\Om_{\trm{cl}}^k(X)\cap \ker i_k^*$.

\begin{dfn} The degree $k$ \textbf{relative de Rham cohomology groups} of the pair $(X,Y)$ is defined by $$H_{\trm{dR}}^{k}(X,Y):=\ker d'_k \big/ \Ima d'_{k-1}, \quad k\in \Z.$$
\end{dfn}

\begin{lem}(1) In the following diagram, the rows are exact, and the squares are commutative.
\[\xymatrix{
0\ar[r] & \Om^k(X,Y)\ar[d]^{d'_k}\ar[r]^{\embed} & \Om^k(X)\ar[d]^{d_k}\ar[r]^{\imath_k^*} & \Om^k(Y)\ar[d]^{d_k}\ar[r] & 0\\
0\ar[r] & \Om^{k+1}(X,Y)\ar[r]^{\embed} & \Om^{k+1}(X)\ar[r]^{\imath_{k+1}^*} & \Om^{k+1}(Y)\ar[r] & 0
}\]

\noindent (2) The following sequence of cohomology groups is exact. $$0\ra H_{\trm{dR}}^0(X,Y)\ra H_{\trm{dR}}^0(X)\ra H_{\trm{dR}}^0(Y)\srl{\de_{\trm{dR}}}\lra H_{\trm{dR}}^1(X,Y)\ra H_{\trm{dR}}^1(X)\ra\cdots.$$
\end{lem}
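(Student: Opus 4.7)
The plan is to treat part (1) as a standard short exact sequence of cochain complexes and then obtain part (2) by invoking the zig-zag lemma. The only genuinely non-formal content is surjectivity of the restriction map $\imath_k^\ast$, which is where smoothness of the embedding and closedness of $Y\subset X$ come in.

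For part (1), exactness at $\Om^k(X,Y)$ is immediate from the definition of the inclusion as a subspace, and exactness at $\Om^k(X)$ is by definition $\Om^k(X,Y)=\ker\imath_k^\ast$. For commutativity of the right-hand square, one just notes that the exterior derivative is natural: $\imath_{k+1}^\ast\circ d_k=d_k\circ \imath_k^\ast$. Commutativity of the left-hand square is then automatic because $d'_k$ is defined as the restriction of $d_k$ to $\ker\imath_k^\ast$, and the preceding naturality shows that this restriction indeed lands in $\ker\imath_{k+1}^\ast=\Om^{k+1}(X,Y)$.

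The main step is surjectivity of $\imath_k^\ast:\Om^k(X)\to\Om^k(Y)$. Since $Y$ is a closed embedded submanifold of $X$, the tubular neighborhood theorem gives an open neighborhood $U$ of $Y$ in $X$ and a smooth retraction $r:U\to Y$ with $r\circ\imath=\mathrm{id}_Y$. Given $\eta\in\Om^k(Y)$, the form $r^\ast\eta\in\Om^k(U)$ satisfies $\imath^\ast r^\ast\eta=\eta$. To globalize, choose a smooth bump function $\chi$ on $X$ supported in $U$ and identically $1$ on a smaller neighborhood of $Y$; then $\chi\cdot r^\ast\eta$, extended by zero to $X\setminus U$, is a smooth form on all of $X$ whose pullback to $Y$ is $\eta$. (A partition-of-unity argument produces $\chi$; the extension is smooth because $\chi$ vanishes near $\partial U$.) This establishes the required surjectivity.

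For part (2), once part (1) is in hand we have a short exact sequence of cochain complexes
\[
0\to \Om^\bl(X,Y)\to \Om^\bl(X)\xrightarrow{\imath^\ast}\Om^\bl(Y)\to 0,
\]
so the zig-zag lemma produces a long exact sequence in cohomology with a connecting homomorphism $\de_{\trm{dR}}:H^k_{\trm{dR}}(Y)\to H^{k+1}_{\trm{dR}}(X,Y)$. Explicitly, given $[\eta]\in H^k_{\trm{dR}}(Y)$, lift $\eta$ to $\tl\eta\in\Om^k(X)$ with $\imath^\ast\tl\eta=\eta$ (using the surjectivity just shown), and set $\de_{\trm{dR}}[\eta]:=[d\tl\eta]$; the form $d\tl\eta$ lies in $\Om^{k+1}(X,Y)$ because $\imath^\ast d\tl\eta=d\eta=0$, and standard diagram-chasing shows the class is independent of the choices. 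Exactness of the resulting long sequence is the content of the zig-zag lemma applied to the short exact sequence above. The only nontrivial input was the surjectivity above; everything else is formal homological algebra.
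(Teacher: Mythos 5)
Your proposal is correct and follows essentially the same route as the paper: exhibit $0\to\Om^\bl(X,Y)\to\Om^\bl(X)\to\Om^\bl(Y)\to 0$ as a short exact sequence of complexes and apply the snake/zig-zag lemma for the long exact sequence. The only difference is cosmetic: where the paper attributes surjectivity of $\imath_k^*$ to Whitney's embedding theorem, you supply the standard tubular-neighborhood-plus-bump-function extension argument, which is a complete (indeed more self-contained) justification of that step.
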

\begin{proof}
(1) The surjectivity of the map $\imath_k^*:\Om^k(Y)\ra \Om^k(X)$ for $k\in\Z$ follows from Whitney's embedding theorem. The diagram is commutative because pull-back commutes with $d$. (2) follows from the snake lemma.
\end{proof}
\begin{prp}[Relative de Rham Theorem] The assignment \beqs
\int:H_{\trm{dR}}^k(X,Y)&\ra H^k(X,Y;\cR)\\
[\om] &\mapsto \left[\int\om\right],
\eeqs is a natural isomorphism of groups for each $k\in\Z$.
\end{prp}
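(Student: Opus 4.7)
My plan is to compare the long exact cohomology sequences of the pair $(X,Y)$ in de Rham and in smooth singular theories via integration, and then to conclude by the five lemma, taking the classical smooth de Rham theorem for $X$ and $Y$ as input.

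First I would upgrade the ordinary de Rham integration map to a chain map between the relative complexes. If $\om\in\Om^k(X,Y)=\ker\imath_k^*$, then for any smooth singular simplex $\si:\De^k\to Y$ (viewed inside $X$ via $\imath$) one has $\int_\si \om=\int_{\De^k}\si^*\imath^*\om=0$, so $\int\om$ annihilates smooth chains in $Y$ and hence lies in $C^k(X,Y;\cR)$. Stokes' theorem gives $\de\int\om=\int d'\om$, so $\int:\Om^\bl(X,Y)\to C^\bl(X,Y;\cR)$ is a chain map; naturality under smooth maps of pairs is the change of variables formula. This chain map fits into a morphism of short exact sequences of complexes, with $\Om^\bl(X,Y)\hookrightarrow\Om^\bl(X)\to\Om^\bl(Y)$ mapping to $C^\bl(X,Y;\cR)\hookrightarrow C^\bl(X;\cR)\to C^\bl(Y;\cR)$ column-wise by integration.

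Taking cohomology produces a ladder of long exact sequences. The key step — and the main thing that needs careful checking — is that the squares involving the connecting homomorphisms commute. Given $[\eta]\in H^k_{\trm{dR}}(Y)$, the de Rham connecting map outputs $[d\wtl\eta]\in H^{k+1}_{\trm{dR}}(X,Y)$ for any chosen extension $\wtl\eta\in\Om^k(X)$, whose existence comes from the surjectivity of $\imath^*$ noted in the preceding lemma. The singular connecting map sends $[\int\eta]$ to $[\de\phi]$ for any cochain $\phi\in C^k(X;\cR)$ extending $\int\eta$; choosing $\phi=\int\wtl\eta$, Stokes gives $\de\phi=\int d\wtl\eta$, so the two paths agree.

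With the ladder commutative, the outer vertical arrows $H^k_{\trm{dR}}(X)\to H^k(X;\cR)$ and $H^k_{\trm{dR}}(Y)\to H^k(Y;\cR)$ are isomorphisms by the classical smooth de Rham theorem, applied componentwise since $\cR=\R[u,u^{-1}]$ is a free $\R$-module; the five lemma then forces the middle arrow $\int:H^k_{\trm{dR}}(X,Y)\to H^k(X,Y;\cR)$ to be an isomorphism for every $k$. The main obstacle I expect is the bookkeeping for the connecting-homomorphism square, since both connecting maps are built from compatible but non-canonical extensions; once the Stokes identification pins the two extensions together, the rest is formal diagram chasing.
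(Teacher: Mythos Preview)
Your proposal is correct and follows essentially the same route as the paper: set up the ladder between the long exact sequences of the pair in de Rham and smooth singular cohomology, verify the connecting-homomorphism square via an extension $\wtl\eta$ and Stokes, and invoke the five lemma with the absolute de Rham theorem as input. The only difference is that you spell out explicitly why $\int$ lands in the relative cochain complex and is a chain map, whereas the paper leaves this implicit.
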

\begin{proof} We use the $5$-lemma. Consider the following diagram. \[\xymatrix{
H_{\trm{dR}}^{k-1}(X)\ar[d]_{\isom}^{\int} \ar[r]^{\imath_{k-1}^*} & H_{\trm{dR}}^{k-1}(Y)\ar[d]_{\isom}^{\int} \ar[r]^{\de_{\trm{dR}}} & H_{\trm{dR}}^{k}(X,Y)\ar[d] \ar[r] & H_{\trm{dR}}^{k}(X)\ar[d]_{\isom}^{\int} \ar[r]^{\imath_{k}^*} & H_{\trm{dR}}^{k}(Y)\ar[d]_{\isom}^{\int}\\
H^{k}(X)\ar[r]^{\imath_{k-1}^*} & H^{k}(Y)\ar[r]^\de & H^{k}(X,Y)\ar[r] & H^{k}(X)\ar[r]^{\imath_{k}^*} & H^{k}(Y)}\]
The first and the last squares are commutative by naturality of de Rham theorem. The third square commutes by Stokes' formula. We verify the commutativity of the second square. Take any $[\te]\in H_{\trm{dR}}^{k-1}(Y)$. There exists $\eta\in\Om^{k-1}(X)$ whose restriction to $Y$ is $\te$. The restriction of the differential form $d\eta\in \Om^k(X)$ to $Y$ is identically zero. Hence $d\eta$ is a representative of the cohomology class $\de_{\trm{dR}}([\te])\in H_{\trm{dR}}^{k}(X,Y)$. Applying the vertical map, we obtain $[\int d\eta]$. Now we apply the vertical map to $[\te]$ and then the connecting map. This gives $\de[\int\te]\in H^{k}(X,Y;\cR)$. Since $\int \eta$ is a cochain in $X$ that restricts to $\int\te$ in $Y$, the $k$-cochain $\int d\eta$ represents the smooth singular relative cohomology class $\de[\int\te]$, since $\imath_k^*(\int d\eta)$ is vanishing.
\end{proof}

\begin{bibdiv}
\begin{biblist}

\bib{BG}{article}{
   author={Ulrich Bunke},
   author={David Gepner},
   title={Differential function spectra, the differential Becker-Gottlieb transfer, and applications to differential algebraic K-theory},
   journal={ArXiv e-Print},
   date={2013},
   note={\texttt{arXiv:math/1306.0247v2 [math.KT]}},
}

\bib{BNV}{article}{
   author={Ulrich Bunke},
   author={Thomas Nikolaus},
   author={Michael V\"olkl},
   title={Differential cohomology theories as sheaves of spectra},
   journal={J. Homotopy Relat. Struct.},
   date={2014},
   pages={1--66},
   }

\bib{BS1}{article}{
   author={Bunke, Ulrich},
   author={Schick, Thomas},
   title={Smooth $K$-theory},
   language={English, with English and French summaries},
   journal={Ast\'erisque},
   number={328},
   date={2009},
   pages={45--135 (2010)},
   issn={0303-1179},
   isbn={978-2-85629-289-1},
  }

\bib{BS2}{article}{
   author={Bunke, Ulrich},
   author={Schick, Thomas},
   title={Uniqueness of smooth extensions of generalized cohomology
   theories},
   journal={J. Topol.},
   volume={3},
   date={2010},
   number={1},
   pages={110--156},
}

\bib{BSSW}{article}{
   author={Bunke, Ulrich},
   author={Schick, Thomas},
   author={Schr{\"o}der, Ingo},
   author={Wiethaup, Moritz},
   title={Landweber exact formal group laws and smooth cohomology theories},
   journal={Algebr. Geom. Topol.},
   volume={9},
   date={2009},
   number={3},
   pages={1751--1790},
  }

\bib{CS}{article}{
   author={Cheeger, Jeff},
   author={Simons, James},
   title={Differential characters and geometric invariants},
   conference={
      title={Geometry and topology},
      address={College Park, Md.},
      date={1983/84},
   },
   book={
      series={Lecture Notes in Math.},
      volume={1167},
      publisher={Springer, Berlin},
   },
   date={1985},
   pages={50--80},
  }

\bib{F}{article}{
   author={Freed, Daniel S.},
   title={Dirac charge quantization and generalized differential cohomology},
   conference={
      title={Surveys in differential geometry},
   },
   book={
      series={Surv. Differ. Geom., VII},
      publisher={Int. Press, Somerville, MA},
   },
   date={2000},
   pages={129--194},
 }

\bib{FL}{article}{
   author={Freed, Daniel S.},
   author={Lott, John},
   title={An index theorem in differential $K$-theory},
   journal={Geom. Topol.},
   volume={14},
   date={2010},
   number={2},
   pages={903--966},
   issn={1465-3060},
}

\bib{GL}{article}{
   author={Alexander Gorokhovsky},
   author={John Lott},
   title={A Hilbert bundle description of differential $K$-theory},
   journal={ArXiv e-Print},
   date={2016},
   note={\texttt{arXiv:math/1512.07185v3 [math.DG]}},
}

\bib{GS}{article}{
   author={Daniel Grady},
   author={Hisham Sati},
   title={Spectral sequences in smooth generalized cohomology},
   journal={ArXiv e-Print},
   date={2016},
   note={\texttt{arXiv:math/1605.03444v1 [math.AT]}},
}

\bib{HMSV}{article}{
   author={Hekmati, Pedram},
   author={Murray, Michael K.},
   author={Schlegel, Vincent S.},
   author={Vozzo, Raymond F.},
   title={A geometric model for odd differential $K$-theory},
   journal={Differential Geom. Appl.},
   volume={40},
   date={2015},
   pages={123--158},
}

\bib{Ho1}{article}{
   author={Ho, Man-Ho},
   title={The differential analytic index in Simons-Sullivan differential
   $K$-theory},
   journal={Ann. Global Anal. Geom.},
   volume={42},
   date={2012},
   number={4},
   pages={523--535},   
}

\bib{Ho2}{article}{
   author={Ho, Man-Ho},
   title={Remarks on flat and differential $K$-theory},
   language={English, with English and French summaries},
   journal={Ann. Math. Blaise Pascal},
   volume={21},
   date={2014},
   number={1},
   pages={91--101},
}

\bib{HS}{article}{
   author={Hopkins, M. J.},
   author={Singer, I. M.},
   title={Quadratic functions in geometry, topology, and M-theory},
   journal={J. Differential Geom.},
   volume={70},
   date={2005},
   number={3},
   pages={329--452},
   issn={0022-040X},
   }

\bib{KV}{article}{
   author={Kahle, Alexander},
   author={Valentino, Alessandro},
   title={$T$-duality and differential $K$-theory},
   journal={Commun. Contemp. Math.},
   volume={16},
   date={2014},
   number={2},
   pages={1350014, 27},
   issn={0219-1997},
   note={\texttt{arXiv:math/0912.2516v2 [math.KT]}},
}

\bib{Ka}{article}{
   author={Karoubi, Max},
   title={Homologie cyclique et $K$-th\'eorie},
   language={French, with English summary},
   journal={Ast\'erisque},
   number={149},
   date={1987},
   pages={147},
   issn={0303-1179},
   }

\bib{Kl}{book}{
   author={Klonoff, Kevin Robert},
   title={An index theorem in differential K-theory},
   note={Thesis (Ph.D.)--The University of Texas at Austin},
   publisher={ProQuest LLC, Ann Arbor, MI},
   date={2008},
   pages={119},
   isbn={978-0549-70973-2},
   }

\bib{Lo}{article}{
   author={Lott, John},
   title={${\bf R}/{\bf Z}$ index theory},
   journal={Comm. Anal. Geom.},
   volume={2},
   date={1994},
   number={2},
   pages={279--311},
   issn={1019-8385},
}   

\bib{SS}{article}{
   author={Simons, James},
   author={Sullivan, Dennis},
   title={Structured vector bundles define differential $K$-theory},
   conference={
      title={Quanta of maths},
   },
   book={
      series={Clay Math. Proc.},
      volume={11},
      publisher={Amer. Math. Soc., Providence, RI},
   },
   date={2010},
   pages={579--599},
  }

\bib{TWZ1}{article}{
   author={Tradler, Thomas},
   author={Wilson, Scott O.},
   author={Zeinalian, Mahmoud},
   title={An elementary differential extension of odd K-theory},
   journal={J. K-Theory},
   volume={12},
   date={2013},
   number={2},
   pages={331--361},
   issn={1865-2433},
  }

  \bib{TWZ2}{article}{
   author={Tradler, Thomas},
   author={Wilson, Scott O.},
   author={Zeinalian, Mahmoud},
   title={Differential K-theory as equivalence classes of maps to Grassmannians and unitary groups},
   journal={New York J. Math.},
   volume={22},
   date={2016},
   number={2},
   pages={527-581},
   issn={1076-9803},
   note={\texttt{arXiv:math/1507.01770v1 [math.KT]}},
   }
\end{biblist}
\end{bibdiv}

\end{document}